\numberwithin{equation}{section}
\newtheorem{proposition}{Proposition}[section]
\newtheorem{theorem}[proposition]{Theorem}
\newtheorem{lemma}[proposition]{Lemma}
\newtheorem{corollary}[proposition]{Corollary}
\newtheorem{definition}[proposition]{Definition}
\theoremstyle{definition}
\newtheorem{remark}[proposition]{Remark}
\newtheorem{example}{Example}
\begin{document}
\title{Decompositions of signed deficient topological measures}
\author{S. V. Butler, University of California Santa Barbara} 
\address{Department of Mathematics,
University of California Santa Barbara, 
552 University Rd., Isla Vista, CA 93117, USA } 
\email{svbutler@ucsb.edu }  
\date{February 20, 2019}
\keywords{decomposition, topological measure, signed topological measure, deficient topological measure, proper signed deficient topological measure, 
Radon measure}
\subjclass[2010]{Primary 28C15}
\maketitle
\begin{abstract}
This paper focuses on various decompositions of topological measures, deficient topological measures, signed topological measures, and signed 
deficient topological measures. These set functions generalize measures and correspond to certain non-linear functionals. 
They may assume  $\infty$ or $-\infty$.
We introduce the concept of a proper signed deficient topological measure
and show that a signed deficient topological measure can be represented as a sum of a signed Radon measure and 
a proper signed deficient topological measure.
We also generalize practically all known results that involve proper deficient topological measures and proper topological measures on
compact spaces to locally compact spaces. We prove that the sum of two proper (deficient) topological measures is a proper (deficient) topological measure.
We give a criterion for a (deficient) topological measure to be proper. 
\end{abstract}

\section{Introduction}

Topological measures (initially called quasi-measures) were introduced by 
J. F. Aarnes in~\cite{Aarnes:TheFirstPaper},~\cite{Aarnes:Pure}, and~\cite{Aarnes:ConstructionPaper}.
These generalizations of measures and corresponding generalizations of linear functionals are connected 
to the problem of linearity of the expectational functional on the algebra of observables in quantum mechanics.  
Applications of topological measures and corresponding non-linear functionals to symplectic topology 
have been studied in numerous papers beginning with ~\cite{EntovPolterovich} (which has been cited over 100 times), and in a monograph ~\cite{PoltRosenBook}. 

Topological measures are defined on open and closed subsets of a topological space, which means that there is no algebraic structure on the  domain.  
They lack subadditivity and other properties typical for measures, and many standard techniques 
often employed for measures and linear functionals are no longer applicable to them. 
Nevertheless, many properties of measures still hold for topological measures. 

The natural generalizations of topological measures are signed topological measures and deficient topological measures.
Signed topological measures of finite norm on a compact space were introduced in~\cite{Grubb:Signed} 
then studied and used in various works, including~\cite{Grubb:SignedqmDimTheory},~\cite{OrjanAlf:CostrPropQlf},~\cite{Svistula:Signed},  
and~\cite{Svistula:Convergence}. 
Deficient topological measures (as real-valued functions on a compact space) were first defined and used by 
A. Rustad and $\emptyset.$ Johansen in~\cite{OrjanAlf:CostrPropQlf} and later independently rediscovered and further developed 
by  M. Svistula in~\cite{Svistula:Signed} and~\cite{Svistula:DTM}.    
In~\cite{Butler:STMLC} we define and study signed deficient topological measures on locally compact spaces. Signed deficient topological measures
generalize topological measures, signed topological measures, and deficient topological measures.  
These set functions may assume $\infty$ or $-\infty$.

In this paper we introduce some new concepts, for example, proper signed deficient topological measures, and prove new results.

We also generalize practically all known results that involve proper deficient topological measures and proper topological measures on 
compact spaces to locally compact spaces. Also, we no longer require that all set functions have a finite "norm". 
We prove that the sum of two proper (deficient) topological measures is a proper (deficient) topological measure.
We give a criterion for a (deficient) topological measure to be proper.

The focus of this paper is various decompositions of topological measures, deficient topological measures, signed topological measures, and signed 
deficient topological measures. 
For example, we show that if  $ \nu$ and $ \mu$ are signed topological measures,  and at least one of them has a finite norm, 
then $\nu - \mu$ is a signed topological measure; and  if $ \mu \le \nu$ then $ \nu - \mu$ is a topological  measure.
In many decompositions we use Radon measures and proper signed deficient topological measures. For example, a signed
deficient topological measure can be represented as the sum of a signed Radon measure and a proper signed deficient topological measure.
For a deficient topological measure $\nu$, the maximal Radon measure majorized by $\nu$ is compact-finite if and only if $ \nu$ is singleton-finite.

In this paper $X$ is a locally compact, connected space. By 
$\mathscr{O}(X)$ we denote the collection of open subsets of   $X $;
by $\mathscr{C}(X)$  the collection of closed subsets of   $X $;
by $\mathscr{K}(X)$  the collection of compact subsets of   $X $.
We denote by $\overline E$ the closure of a set $E$, and  $ \bigsqcup$ stands for a union of disjoint sets.
We say that a signed set function is real-valued if its range is $ \mathbb{R}$. 
When we consider set functions into extended real numbers they are not identically $ \infty$ or $- \infty$. 

\section{Preliminaries}

\begin{definition} \label{MDe2}
Let $X$ be a  topological space and $\nu$ be a set function on a family of subsets of $X$ that contains $\mathscr{O}(X) \cup \mathscr{C}(X)$. 
We say that 
\begin{itemize}
\item
$\nu$ is compact-finite if $ |\nu(K) | < \infty$ for any $ K \in \mathscr{K}(X)$;
\item
$\nu$ is singleton-finite if $ | \nu(\{x\}) | < \infty$ for every $x \in X$;
\item 
$\nu$ is locally finite if every point $x$ has a neighborhood $U_x$ such that $| \nu(U_x)| < \infty$;
\item
$\nu$ is simple if it only assumes  values $0$ and $1$;
\item
a nonnegative set-funciton $ \nu$ is finite if $ \nu(X) < \infty$.
\end{itemize}
\end{definition}

\begin{definition}
A Radon measure  $m$  on $X$ is  a Borel measure that is compact-finite, 
outer regular on all Borel sets, and inner regular on all open sets, i.e.
$ m(E) = \inf \{ m(U): \ \ E \subseteq U, \ U \text{  open  } \} $ for every Borel set $E$, and 
$m(U) = \sup \{  m(K): \ \ K \subseteq U, \  K  \text{  compact  } \}$ for every open set $U$. 
By a signed Radon measure we mean a difference of two Radon measures, at least one of which is finite.
\end{definition}

\noindent
The set of all Radon measures is a positive cone. 

\begin{definition}\label{DTM}
A  deficient topological measure on a locally compact space $X$ is a set function
$\nu:  \mathscr{C}(X)\cup \mathscr{O}(X) \longrightarrow [0, \infty]$ 
which is finitely additive on compact sets, inner compact regular, and 
outer regular, i.e. :
\begin{enumerate}[label=(DTM\arabic*),ref=(DTM\arabic*)]
\item \label{DTM1}
if $C \cap K = \emptyset, \ C,K \in \mathscr{K}(X)$ then $\nu(C \sqcup K) = \nu(C) + \nu(K)$; 
\item \label {DTM2} 
$ \nu(U) = \sup\{ \nu(C) : \ C \subseteq U, \ C \in \mathscr{K}(X) \} $
 for $U\in\mathscr{O}(X)$;
\item \label{DTM3} 
$ \nu(F) = \inf\{ \nu(U) : \ F \subseteq U, \ U \in \mathscr{O}(X) \} $  for  $F \in \mathscr{C}(X)$.
\end{enumerate}
\end{definition} 

\noindent
For a closed set $F$, $ \nu(F) = \infty$ iff $ \nu(U) = \infty$ for every open set $U$ containing $F$.

\begin{remark} \label{DTMsvva}
In~\cite[Section 3]{Butler:DTMLC} it is proved that
a deficient topological measure $ \nu$ is superadditive, i.e. 
if $ \bigsqcup_{t \in T} A_t \subseteq A, $  where $A_t, A \in \mathscr{O}(X) \cup \mathscr{C}(X)$,  
and at most one of the closed sets (if there are any) is not compact, then 
$\nu(A) \ge \sum_{t \in T } \nu(A_t)$. In particular, $\nu$ is monotone, i.e. if $A \subseteq B, \ A,B \in \mathscr{O}(X) \cup \mathscr{C}(X)$ then $ \nu(A) \le \nu(B)$. 
\end{remark}

\begin{definition}\label{TMLC}
A topological measure on $X$ is a set function
$\mu:  \mathscr{C}(X)\cup \mathscr{O}(X)  \longrightarrow [0,\infty]$ satisfying the following conditions:
\begin{enumerate}[label=(TM\arabic*),ref=(TM\arabic*)]
\item \label{TM1} 
if $A,B, A \sqcup B \in \mathscr{K}(X) \cup \mathscr{O}(X) $ then
$
\mu(A\sqcup B)=\mu(A)+\mu(B);
$
\item \label{TM2}  
$
\mu(U)=\sup\{\mu(K):K \in \mathscr{K}(X), \  K \subseteq U\}
$ for $U\in\mathscr{O}(X)$;
\item \label{TM3}
$
\mu(F)=\inf\{\mu(U):U \in \mathscr{O}(X), \ F \subseteq U\}
$ for  $F \in \mathscr{C}(X)$.
\end{enumerate}
\end{definition} 

\begin{definition}\label{SDTM}
A signed deficient topological measure on a locally compact space $X$ is a set function
$\nu:  \mathscr{C}(X)\cup \mathscr{O}(X)  \longrightarrow [ - \infty, \infty ] $  that assumes at most one of $\infty, 
-\infty$ and that is finitely additive on compact sets, inner regular on open sets, and outer regular on closed sets, 
i.e. 
\begin{enumerate}[label=(SDTM\arabic*),ref=(SDTM\arabic*)]
\item \label{SDTM1}
If $C \cap K = \emptyset, \ C,K \in \mathscr{K}(X)$ then $\nu(C \sqcup K) = \nu(C) + \nu(K);$ 
\item \label{SDTM2} 
$\nu(U)=\lim\{\mu(K):K \in \mathscr{K}(X), \  K \subseteq U\}
$ for $U\in\mathscr{O}(X)$;
\item \label{SDTM3}
$\nu(F)=\lim\{\mu(U):U \in \mathscr{O}(X), \ F \subseteq U\}$ for  $F \in \mathscr{C}(X)$.
\end{enumerate}
\end{definition} 

\begin{remark}
In condition \ref{SDTM2} we mean the limit of the net $\nu(C)$ with the index set $\{ C \in \mathscr{K}(X): \ C \subseteq U\}$ ordered
by inclusion. The limit exists and is equal to $\nu(U)$. Condition  \ref{SDTM3} is interpreted in a similar way, 
with the index set  being $\{ U \in \mathscr{O}(X): \ U \supseteq C \}$ ordered by reverse inclusion.
\end{remark}

\begin{remark} \label{byCompacts}
Since we consider set-functions that are not identically $ \infty $ or $ - \infty$, we see that for a signed deficient topological measure $ \nu (\emptyset) = 0$.
If $\nu$ and $\mu$ are signed deficient topological measures that agree on $\mathscr{K}(X)$, then $\nu =\mu$;
if $\nu \le \mu$ on $\mathscr{K}(X)$, then $\nu  \le \mu$.
\end{remark}

\begin{definition} \label{SDTMnorDe}
We define  $\| \nu \| = \sup \{ | \nu(K)|  :  K \in  \mathscr{K}(X) \} $ for a signed deficient topological measure $\nu$.
\end{definition}
\noindent
If $\mu$ is a deficient topological measure then $ \| \mu \| = \mu(X)$.

\begin{definition} \label{STMLC}
A signed topological measure on a locally compact space $X$ is a set function
$\mu: \mathscr{O}(X) \cup \mathscr{C}(X) \longrightarrow [-\infty, \infty]$  that assumes at most one of $\infty, 
-\infty$ and satisfies the following conditions:
\begin{enumerate}[label=(STM\arabic*),ref=(STM\arabic*)]
\item \label{STM1} 
if $A,B, A \sqcup B \in \mathscr{K}(X) \cup \mathscr{O}(X) $ then
$\mu(A\sqcup B)=\mu(A)+\mu(B);$
\item \label{STM2}  
$\mu(U)=\lim\{\mu(K):K \in \mathscr{K}(X), \  K \subseteq U\}
$ for $U\in\mathscr{O}(X)$;
\item \label{STM3}
$\mu(F)=\lim\{\mu(U):U \in \mathscr{O}(X), \ F \subseteq U\}$ for  $F \in \mathscr{C}(X)$.
\end{enumerate}
\end{definition} 

By $DTM(X), TM(X), SDTM(X)$, and $STM(X)$ we denote, respectively,  the collections of all deficient topological measures on $X$, 
all topological measures on $X$,  all signed deficient topological measures on $X$, and all signed topological measures on $X$. 
Let $\mathbf{DTM}(X),  \mathbf{TM}  (X), \mathbf{SDTM} (X)$, and $ \mathbf{STM} (X)$ represent 
subfamilies of  set functions with finite $\| \cdot \|$ from $DTM(X)$, $TM(X)$, $SDTM(X)$, and $STM(X)$, respectively.

\begin{remark} \label{Vloz}
Let $X$ be locally compact. 
We denote by $M(X)$ the collection of all Borel measures on $X$ that are inner regular on open sets and outer regular 
(restricted to $\mathscr{O}(X) \cup \mathscr{C}(X)$). Thus, $M(X)$ includes regular Borel measures and Radon measures. Let $SM(X)$ be the family of signed measures 
that are differences of two measures from $M(X)$, at least one of which is finite. 
In general (see~\cite[Section 3]{Butler:STMLC}),
\begin{align} \label{incluMTD}
 M(X) \subsetneqq  TM(X)  \subsetneqq  DTM(X),
\end{align}
and 
\begin{align} \label{incluSMTD}
 SM(X) \subsetneqq  STM(X)  \subsetneqq  SDTM(X).
\end{align}
\end{remark}

The following easy lemma can be found, for example, in~\cite[Chapter X, $\S$ 50, Theorem A]{Halmos}.
\begin{lemma} \label{HalmEzLe}
If $ C \subseteq U \cup V$, where $C$ is compact, $U, V$ are open, then there exist compact sets 
$K$ and $D$ such that $C = K \cup  D, \ K \subseteq U, \ \ D \subseteq V$.
\end{lemma}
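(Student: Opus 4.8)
The plan is to reduce the decomposition to the standard sandwiching property of locally compact (Hausdorff) spaces. First I would isolate the two ``forced'' pieces of $C$: set $A = C \setminus V$ and $B = C \setminus U$. Since $C$ is compact and $U, V$ are open, both $A$ and $B$ are closed subsets of $C$, hence compact. Because $C \subseteq U \cup V$, we have $A \cap B = C \setminus (U \cup V) = \emptyset$, and moreover $A \subseteq U$ while $B \subseteq V$. Thus the problem becomes: enlarge $A$ into a compact piece of $C$ still sitting inside $U$, in such a way that whatever of $C$ is left over lands in $V$.

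The key step is the sandwiching lemma for locally compact Hausdorff spaces: a compact set contained in an open set can be separated from the complement by an open set together with its closure. Applying this to the compact set $A = C \setminus V$ and the open set $U \supseteq A$, I obtain an open set $W$ with $A \subseteq W \subseteq \overline{W} \subseteq U$.

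Now I would define $K = C \cap \overline{W}$ and $D = C \setminus W$. Then $K$ is compact, being the intersection of the compact set $C$ with the closed set $\overline{W}$, and $D$ is compact as a closed subset of $C$. Since $W \subseteq \overline{W}$, we have $\overline{W} \cup (X \setminus W) = X$, so $K \cup D = C \cap (\overline{W} \cup (X \setminus W)) = C$. The inclusion $K \subseteq \overline{W} \subseteq U$ is immediate. Finally, from $A = C \setminus V \subseteq W$ it follows, upon passing to complements inside $C$, that $C \setminus W \subseteq C \cap V \subseteq V$, that is, $D \subseteq V$. This yields the required decomposition.

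There is essentially no serious obstacle here, as the lemma is classical. The only point requiring care is the passage from a merely separating open set to one whose closure still lies inside $U$; this is precisely where local compactness (together with the ambient Hausdorff hypothesis) is used, via the sandwiching lemma. Once $\overline{W} \subseteq U$ is secured, the verification that $K$ and $D$ are compact and satisfy $K \cup D = C$, $K \subseteq U$, and $D \subseteq V$ is purely set-theoretic.
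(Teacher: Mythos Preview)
Your argument is correct. The paper itself does not supply a proof of this lemma but merely cites Halmos, so there is no in-paper proof to compare line by line.

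That said, your route differs slightly from the classical Halmos argument. Halmos works entirely inside the compact (hence normal) Hausdorff space $C$: the disjoint closed sets $C\setminus U$ and $C\setminus V$ are separated by disjoint relatively open sets $W_1,W_2\subseteq C$, and one takes $K=C\setminus W_2$, $D=C\setminus W_1$. This uses only the Hausdorff hypothesis, not local compactness of the ambient space. Your proof instead invokes the ambient sandwiching lemma (the paper's Lemma~\ref{easyLeLC}) to get $\overline W\subseteq U$, which requires local compactness of $X$. In the paper's standing setting ($X$ locally compact) both approaches are valid; Halmos's is marginally more economical in hypotheses, while yours ties in neatly with the toolkit already established in the paper.
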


\noindent
The following fact is in~\cite[Chapter XI, 6.2]{Dugundji}:
\begin{lemma} \label{easyLeLC}
Let $K \subseteq U, \ K \in \mathscr{K}(X),  \ U \in \mathscr{O}(X)$ in a locally compact space $X$.
Then there exists a set  $V \in \mathscr{O}(X)$ such that
$ K \subseteq V \subseteq \overline V \subseteq U, \ \overline V \in \mathscr{K}(X). $ 
\end{lemma}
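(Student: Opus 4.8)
The plan is to prove the statement in two stages: a pointwise version first, and then a compactness argument that upgrades it to the compact set $K$. Throughout I take $X$ to be locally compact \emph{Hausdorff}, as is standard in this setting and as is already implicit in the definitions of Radon measure and topological measure above; the Hausdorff hypothesis is what makes the argument work.

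First I would establish the pointwise claim: for each $x \in U$ there is an open set $V_x$ with $x \in V_x \subseteq \overline{V_x} \subseteq U$ and $\overline{V_x} \in \mathscr{K}(X)$. By local compactness, $x$ has an open neighborhood $W$ whose closure $\overline W$ is compact. The subspace $\overline W$ is compact Hausdorff, hence normal and in particular regular. Now $x$ lies in the set $U \cap W$, which is open in $X$ and contained in $\overline W$, so its complement $\overline W \setminus (U \cap W)$ is closed in $\overline W$ and misses $x$. Applying the shrinking form of regularity inside $\overline W$, I obtain a set $A$ that is open in $\overline W$ with $x \in A$ and $\mathrm{cl}_{\overline W}(A) \subseteq U \cap W$. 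Setting $V_x := A \cap W$ gives an honest open subset of $X$ (an open subset of $\overline W$ intersected with the open set $W$) containing $x$, and since $V_x \subseteq \overline W$ with $\overline W$ closed in $X$, the closure of $V_x$ in $X$ coincides with its closure in $\overline W$, which sits inside $U \cap W \subseteq U$. This closure, being a closed subset of the compact set $\overline W$, is compact, so all three desired properties hold.

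Next I would use compactness of $K$. The family $\{V_x : x \in K\}$ is an open cover of $K$, so there are finitely many points $x_1, \dots, x_n$ with $K \subseteq V := \bigcup_{i=1}^n V_{x_i}$. Then $V$ is open and contains $K$, and because the union is finite we have $\overline V = \bigcup_{i=1}^n \overline{V_{x_i}}$, a finite union of compact sets each contained in $U$. Hence $\overline V \in \mathscr{K}(X)$ and $\overline V \subseteq U$, so $K \subseteq V \subseteq \overline V \subseteq U$ with $\overline V$ compact, exactly as required.

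The only genuinely delicate step is the pointwise claim, and its subtlety lies entirely in forcing the closure inside $U$ rather than merely inside some compact neighborhood of $x$: this is precisely where the Hausdorff hypothesis enters, since it makes the compact neighborhood $\overline W$ regular and thereby allows $x$ to be separated from the complement of $U$ within $\overline W$. Everything afterward—the finite subcover and the fact that for finite unions the closure distributes and compactness is preserved—is routine.
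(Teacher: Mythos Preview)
Your proof is correct and is essentially the standard argument one finds in textbooks. The paper itself does not prove this lemma; it simply cites Dugundji, \emph{Topology}, Chapter XI, 6.2, and your argument is precisely the kind of proof given there. Your explicit remark that the Hausdorff hypothesis is being used (to make compact subsets closed and compact spaces regular) is apt, since the paper's standing assumptions list only ``locally compact, connected'' without stating Hausdorff separately, though it is clearly intended.
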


\noindent
The following Definition is from~\cite[Section 2]{Butler:DTMLC}. 

\begin{definition} \label{laplu}
Given signed set function $\lambda: \mathscr{K}(X)  \longrightarrow [-\infty, \infty] $ which assumes at most one of $ \infty, -\infty$
we define two set functions on $\mathscr{O}(X) \cup \mathscr{C}(X)$, 
the positive variation $\lambda^{+}$ and the total variation $| \lambda|$,  
as follows:  \\
for an open subset $U \subseteq X$ let 
\begin{eqnarray} 
\lambda^{+}(U) = \sup \{\lambda(K): \  K \subseteq U,  K \in \mathscr{K}(X) \}; 
\end{eqnarray}
\begin{eqnarray} 
|\lambda| (U) = \sup \{ \sum_{i=1}^n |\lambda(K_i)| : \  \bigsqcup_{i=1}^n K_i \subseteq U, \ K_i \subseteq \mathscr{K}(X),  \, n \in \mathbb{N} \}; 
\end{eqnarray}
and for a closed subset $F \subseteq X$ let 
\begin{eqnarray}
\lambda^{+}(F)  = \inf\{ \lambda^{+} (U) : \ F \subseteq U, \ U \in \mathscr{O}(X)\}; 
\end{eqnarray}
\begin{eqnarray} 
|\lambda| (F)  = \inf\{ |\lambda| (U) : \ F \subseteq U, \ U \in \mathscr{O}(X)\}. 
\end{eqnarray} 
We define the negative variation $\lambda^{-}$ associated with a signed set function $\lambda$  
as a  set function $\lambda^{-} = (- \lambda)^{+}$.  
\end{definition}

\begin{remark} \label{varn}
In~\cite[Sections 2 and 3]{Butler:DTMLC} we show that  $\lambda \le \lambda^{+}$ on $ \mathscr{K}(X)$. $ \lambda^{+}$ and $ | \lambda |$ are deficient topological measures. 
Thus, by Remark \ref{DTMsvva}  they are monotone and superadditive.
Also, $| \lambda \pm \nu| \le |\lambda| +| \nu|$. 
If $ \lambda \le \nu $ on $ \mathscr{K}(X)$  then $ \lambda^{+} \le \nu^+$; if $ \lambda$ is a deficient topological measure then $ \lambda^{+} = \lambda$.
If $\lambda$ is a signed measure, then  $ \lambda^{+}$ and $ | \lambda |$ are the usual positive 
and total variations.
\end{remark}

\section{Proper deficient topological measures}
 


\begin{remark}
If $ \nu $ is a nonnegative monotone set function that is outer regular on each singleton then $ \nu$ is singleton-finite 
iff $ \nu$ is  locally finite. In particular, this holds for deficient topological measures. 
If $ \nu$ is also subadditive (in particular, if $ \nu$ is a measure) then $\nu$ is locally finite iff $ \nu$ is compact-finite.
\end{remark}   

\begin{example} \label{NotsinfE}
Consider $ \mu$ on the Borel subsets of $X$.
For a fixed $x$ let $ \mu(A) = \infty$ if $ x \in A$, and $\mu(A) = 0$ otherwise. 
Then $\mu$ is a regular  Borel measure that is not locally finite. 

Here is an example of a deficient topological measure that is not singleton-finite. Fix $x \in X$, and define 
$ \lambda$ on $ \mathscr{K}(X)$ by:  $ \lambda(K) = \infty $ if $ x \in K$ and $ \lambda(K) = 0$ if $ x \notin K$.  By Remark \ref{varn} 
$ \nu = \lambda^{+}$ is a deficient topological measure, and $ \nu$ is not singleton-finite.
\end{example}

\begin{remark}  \label{sifNOTfncmp}
If $ \nu$ is compact-finite then it is singleton-compact.  For an example of a singleton-finite deficient topological measure 
that is not compact-finite, see the last example in~\cite[Section 6]{Butler:DTMLC}.
\end{remark}
 
\begin{definition} \label{properSDTM}
A signed deficient topological measure $\nu$  is called proper if from $m \le |\nu| $, 
where $m$ is a Radon measure and $| \nu|$ is a total variation of $\nu$, it follows that $m = 0$.
\end{definition}

\begin{remark} \label{properDtm}
If $\nu$ is a deficient topological measure (in particular, a topological measure) 
then $| \nu| = \nu$, and $\nu$ is  a proper deficient topological measure if
from $m \le \nu $, where $m$ is a Radon measure, it follows that $m = 0$. 
A signed deficient topological measure $\nu$  is proper if 
the deficient topological measure $| \nu|$ is proper.   
A signed Radon measure $m$ is proper  iff $|m|$ (which is a Radon measure) is proper, i.e. iff $m=0$.

A finite Radon measure is a regular Borel measure, so our definition of a proper deficient topological measure or a 
proper signed topological measure coincides with definitions in all previous papers.
\end{remark}

\begin{definition} \label{Dnutil}
Let $X$ be locally compact. Suppose a set function $\nu: \mathscr{K}(X) \rightarrow [0, \infty]$ is monotone, finitely additive,
and $ \nu ( \emptyset ) < \infty$. 
Define a set function 
$\widetilde \nu: \mathscr{K}(X) \longrightarrow [0, \infty]$ by:
$$ \widetilde \nu (K) = \inf \{ \sum_{i=1}^n \nu (K_i): \  \  K \subseteq \bigcup_{i=1}^n K_i, \ K_i \in \mathscr{K}(X), \ n \in \mathbb{N} \}. $$
\end{definition}

\begin{lemma} \label{nutil}
The following holds for $\widetilde \nu$:
\begin{enumerate}[label=(p\arabic*),ref=(p\arabic*)]
\item \label{p1}
$\widetilde \nu (\emptyset) = 0$.
\item  \label{p2}
$\widetilde \nu \le \nu$ on $ \mathscr{K}(X)$.
\item  \label{p3}
$\widetilde \nu$ is monotone, i.e. $K \subseteq C$ implies $\widetilde \nu(K) \le \widetilde \nu(C)$. 
\item \label{p4}
$\widetilde \nu$ is subadditive, i.e. $\widetilde \nu(K \cup C) \le \widetilde \nu(K)  + \widetilde \nu(C)$.
\item \label{p5}
If $K \in \mathscr{K}(X)$ then  $\widetilde \nu(K) = \inf \{ \sum_{i=1}^n \nu (C_i): \  \  K = \bigcup_{i=1}^n C_i, \ C_i \in \mathscr{K}(X) \} $.
\item \label{p6}
$\widetilde \nu$ is finitely additive on compact sets, i.e. 
if $C, K $ are disjoint compact sets, then $\widetilde \nu( C \sqcup K) = \widetilde \nu(C) + \widetilde \nu(K)$.
\item \label{p7}
If $\nu$ is compact-finite, then so is $\widetilde \nu$.
\end{enumerate}
\end{lemma}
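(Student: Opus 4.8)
The plan is to dispatch (p1)--(p4) and (p7) by elementary manipulations of the covering families, then to establish (p5) as a preliminary tool, and finally to use (p5) to prove the one genuinely substantive item, the superadditivity in (p6).

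First I would record that finite additivity forces $\nu(\emptyset) = 0$, since $\nu(\emptyset) = \nu(\emptyset \sqcup \emptyset) = 2\nu(\emptyset)$ together with $\nu(\emptyset) < \infty$ gives $\nu(\emptyset)=0$. For (p1), the one-set cover $\emptyset \subseteq \emptyset$ yields $\widetilde\nu(\emptyset) \le \nu(\emptyset) = 0$, and nonnegativity gives the reverse. For (p2), the one-set cover $K \subseteq K$ gives $\widetilde\nu(K) \le \nu(K)$. For (p3), if $K \subseteq C$ then every finite compact cover of $C$ is automatically a cover of $K$, so the infimum defining $\widetilde\nu(K)$ ranges over a larger family, whence $\widetilde\nu(K) \le \widetilde\nu(C)$. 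For (p4), concatenating a cover of $K$ with a cover of $C$ produces a cover of $K \cup C$ whose total $\nu$-sum is the sum of the two sums; passing to infima yields subadditivity. Finally (p7) is immediate from (p2), since $\widetilde\nu(K) \le \nu(K) < \infty$ whenever $\nu$ is compact-finite.

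Next I would prove (p5). The infimum over \emph{exact} covers ($K = \bigcup C_i$) is an infimum over a subfamily of all covers, so it dominates $\widetilde\nu(K)$. For the reverse inequality, given any cover $K \subseteq \bigcup_{i=1}^n K_i$, I replace each $K_i$ by $K_i \cap K$; since $X$ is (locally compact) Hausdorff, $K$ is closed and each $K_i \cap K$ is compact, the resulting family is an exact cover of $K$, and monotonicity of $\nu$ gives $\nu(K_i \cap K) \le \nu(K_i)$, so the exact cover carries no larger $\nu$-sum. Taking infima then gives equality.

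The substantive step is (p6), and its hard direction is the superadditivity $\widetilde\nu(C \sqcup K) \ge \widetilde\nu(C) + \widetilde\nu(K)$ for disjoint compact $C, K$ (the opposite inequality is just (p4)). Here (p5) does the work: I take an exact cover $C \sqcup K = \bigcup_{i=1}^n D_i$ and split each piece as $D_i = (D_i \cap C) \sqcup (D_i \cap K)$, a disjoint union of compact sets precisely because $C$ and $K$ are disjoint. Finite additivity of $\nu$ then gives $\nu(D_i) = \nu(D_i \cap C) + \nu(D_i \cap K)$, and summing over $i$ while noting that $\{D_i \cap C\}$ exactly covers $C$ and $\{D_i \cap K\}$ exactly covers $K$ yields $\sum_{i=1}^n \nu(D_i) \ge \widetilde\nu(C) + \widetilde\nu(K)$. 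Passing to the infimum over exact covers and invoking (p5) finishes the argument. The main obstacle is organizing this splitting so that the pieces stay compact and the induced covers of $C$ and $K$ are genuinely exact; once that is arranged, everything rests on the exact-cover reformulation (p5) and the finite additivity hypothesis on $\nu$, and the extended-real-valued sums cause no trouble since all terms are nonnegative.
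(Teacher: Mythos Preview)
Your proof is correct and follows essentially the same route as the paper's: (p1)--(p3) and (p7) are dispatched directly from the definition, (p4) by concatenating covers, and (p5) by intersecting an arbitrary cover with $K$. For (p6) your argument is a mild variant---you invoke (p5) to reduce to \emph{exact} covers so that $D_i = (D_i\cap C)\sqcup(D_i\cap K)$ and finite additivity of $\nu$ gives equality in the key step, whereas the paper works with arbitrary covers and uses monotonicity together with finite additivity to get the inequality $\nu(C_i)\ge \nu(C_i\cap C)+\nu(C_i\cap K)$; the two arguments are interchangeable.
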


\begin{proof}
Properties \ref{p1} - \ref{p3} easily follow from the definition of $\widetilde \nu$.
\begin{enumerate}
\item[(p4)]
If $\widetilde \nu(K) = \infty $ or $\widetilde \nu(C) = \infty$ then the subadditivity is obvious. Assume now that $ \widetilde \nu(K)  < \infty$ and 
$\widetilde \nu(C) < \infty$. For $\epsilon > 0$ find compact sets $K_i , \ i=1, \ldots n$ and $ C_j, \ j=1, \ldots, m$ such that
$ K \subseteq \bigcup_{i=1}^n K_i, \ C \subseteq \bigcup_{j=1}^m C_j, \ \sum_{i=1}^n \nu(K_i) - \nu(K) < \epsilon, \ 
\sum_{j=1}^m \nu(C_j)  - \nu(C) < \epsilon$. Since the sets $K_i, C_j$ cover $ C \cup K$ , we have 
$$ \widetilde \nu( K \cup C) \le \sum_{i=1}^n \nu(K_i) + \sum_{j=1}^m \nu(C_j) < \widetilde \nu(K) + \widetilde \nu(C)  + 2 \epsilon.$$
The subadditivity of $\widetilde \nu$ follows.
\item[(p5)]
Let $\alpha = \inf \{ \sum_{i=1}^n \nu (C_i): \  \  K = \bigcup_{i=1}^n C_i, \ C_i \in \mathscr{K}(X) \}$.
Clearly, $ \widetilde \nu(K) \le \alpha$. On the other hand, for any  cover $\{ K_i: i=1, \ldots, n\}$ of $K$ 
we have $K = \cup_{i=1}^n ( K \cap K_i)$, so $ \alpha \le \sum_{i=1}^n \nu(K \cap K _i) \le \sum_{i=1}^n \nu(K_i)$, 
leading to $\alpha \le \widetilde \nu(K)$.  
\item[(p6)]
Suppose $C \sqcup K \subseteq \bigcup_{i=1}^n C_i$ where $C_i \in \mathscr{K}(X)$. 
Note that by monotonicity and finite additivity of $\nu$ on compact sets we have $\nu(C_i) \ge \nu(C_i  \cap C ) + \nu(C_i \cap K)$.
Then 
$$ \sum_{i=1}^n \nu(C_i) \ge \sum_{i=1}^n \nu(C_i \cap C) +  \sum_{i=1}^n \nu(C_i \cap K) \ge \widetilde \nu(C)  + \widetilde \nu(K).$$
It follows that $\widetilde \nu(C \sqcup K)  \ge \widetilde \nu(C)  + \widetilde \nu(K)$. 
By part \ref{p4} the statement follows.
\item[(p7)]
Follows from part \ref{p2}.
\end{enumerate} 
\end{proof}

\noindent
In Definition \ref{Dnutil} as $\nu$  we may use  a deficient topological  measure,  a topological measure or a measure.

\begin{lemma} \label{Lnutilplu}
Let $X$ be locally compact. Suppose a set function 
$\nu: \mathscr{K}(X) \rightarrow [0, \infty]$ is monotone, finitely additive, and $ \nu ( \emptyset) < \infty$. 
Let ${\widetilde \nu}^+$ be the positive variation according to Definition \ref{laplu} of the set function
$\widetilde \nu$ from Definition \ref{Dnutil}. Then 
\begin{enumerate}[label=(\roman*),ref=(\roman*)]
\item \label{t1}
${\widetilde \nu}^+$ is a deficient topological measure.
\item \label{t2}
If $ \nu$ is a deficient topological measure then ${\widetilde \nu}^+ \le \nu$. 
If  $ \nu$ is also compact-finite, then so is ${\widetilde \nu}^+ $; if $ \nu$ is finite, then so is ${\widetilde \nu}^+ $. 
\item \label{t3}
${\widetilde \nu}^+$ is finitely subadditive on $\mathscr{O}(X)$.
\item \label{t4}
$m={\widetilde \nu}^+ $ is (a restriction to $ \mathscr{O}(X) \cup \mathscr{C}(X)$ of) a Borel measure which is inner regular on open sets and outer regular. 
If $\nu$ is compact-finite, 
then ${\widetilde \nu}^+$ is a Radon measure. If $ \nu$ is finite, then  ${\widetilde \nu}^+$ is a regular Borel measure. 
\end{enumerate}
\end{lemma}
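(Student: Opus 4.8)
The first three parts are short. For Part~\ref{t1}, note that $\widetilde\nu$ is a nonnegative set function on $\mathscr{K}(X)$ with $\widetilde\nu(\emptyset)=0$ (Lemma~\ref{nutil}), so Definition~\ref{laplu} and Remark~\ref{varn} apply directly and ${\widetilde\nu}^{+}$ is a deficient topological measure. For Part~\ref{t2}, I combine $\widetilde\nu\le\nu$ on $\mathscr{K}(X)$ (property~\ref{p2}) with the facts from Remark~\ref{varn} that the assignment $\lambda\mapsto\lambda^{+}$ preserves the order on $\mathscr{K}(X)$ and that $\nu^{+}=\nu$ when $\nu$ is a deficient topological measure; this gives ${\widetilde\nu}^{+}\le\nu^{+}=\nu$, and evaluating this inequality on compact sets and on $X$ yields the compact-finite and finite assertions. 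For Part~\ref{t3}, given open $U,V$ and a compact $C\subseteq U\cup V$, Lemma~\ref{HalmEzLe} writes $C=K\cup D$ with $K\subseteq U$ and $D\subseteq V$ compact, and subadditivity of $\widetilde\nu$ (property~\ref{p4}) gives $\widetilde\nu(C)\le\widetilde\nu(K)+\widetilde\nu(D)\le{\widetilde\nu}^{+}(U)+{\widetilde\nu}^{+}(V)$; taking the supremum over such $C$ and then inducting yields finite subadditivity on $\mathscr{O}(X)$.

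Part~\ref{t4} is the heart of the matter, and I would prove it by a Method~I outer-measure construction. Write $m={\widetilde\nu}^{+}$; by Parts~\ref{t1} and~\ref{t3} it is a deficient topological measure that is finitely subadditive on open sets. First I upgrade to countable subadditivity on open sets: if $U=\bigcup_{n}U_{n}$ and $C\subseteq U$ is compact, compactness gives $C\subseteq\bigcup_{n\le N}U_{n}$ for some $N$, so monotonicity and finite subadditivity give $m(C)\le m(\bigcup_{n\le N}U_{n})\le\sum_{n}m(U_{n})$, and taking the supremum over such $C$ via inner regularity (property~\ref{DTM2}) gives $m(U)\le\sum_{n}m(U_{n})$. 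I then define $\mu^{*}(E)=\inf\{m(U):E\subseteq U\in\mathscr{O}(X)\}$ for every $E\subseteq X$; monotonicity together with this countable subadditivity makes $\mu^{*}$ an outer measure with $\mu^{*}(\emptyset)=0$.

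The key step is to show that every open $V$ is Carath\'eodory measurable. Take $E$ with $\mu^{*}(E)<\infty$ and $\epsilon>0$, choose open $U\supseteq E$ with $m(U)<\mu^{*}(E)+\epsilon$, and, using inner regularity of $m$ on the open set $U\cap V$, pick a compact $K\subseteq U\cap V$ with $m(K)>m(U\cap V)-\epsilon$. Then $E\cap V\subseteq U\cap V$ and $E\setminus V\subseteq U\setminus K$ (open, since $K$ is compact), while the disjoint decomposition $U=K\sqcup(U\setminus K)$ lets superadditivity of the deficient topological measure $m$ (Remark~\ref{DTMsvva}) give $m(K)+m(U\setminus K)\le m(U)$, so
\[ \mu^{*}(E\cap V)+\mu^{*}(E\setminus V)\le m(U\cap V)+m(U\setminus K)<m(K)+m(U\setminus K)+\epsilon\le m(U)+\epsilon<\mu^{*}(E)+2\epsilon, \]
and letting $\epsilon\to0$ proves measurability. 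Hence every Borel set is $\mu^{*}$-measurable and the restriction $\mu$ of $\mu^{*}$ to the Borel $\sigma$-algebra is a Borel measure. It agrees with $m$ on open sets (take $U$ itself in the infimum) and, since both are outer regular through open sets (property~\ref{DTM3}), on closed sets as well; inner regularity of $\mu$ on open sets is inherited from property~\ref{DTM2}, and outer regularity holds by construction, so $m$ is the restriction of $\mu$ to $\mathscr{O}(X)\cup\mathscr{C}(X)$. When $\nu$ is compact-finite, Lemma~\ref{easyLeLC} traps each compact $K$ in an open $V$ with $\overline V$ compact, whence ${\widetilde\nu}^{+}(K)\le{\widetilde\nu}^{+}(V)\le\widetilde\nu(\overline V)<\infty$ by property~\ref{p7} and monotonicity, so $\mu$ is a Radon measure; when $\nu$ is finite the bound ${\widetilde\nu}^{+}\le\nu$ forces $\mu(X)<\infty$, and a finite Radon measure is a regular Borel measure (Remark~\ref{properDtm}).

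The only genuinely delicate point is the Carath\'eodory measurability of open sets in Part~\ref{t4}: it is the single place where the subadditivity from Part~\ref{t3} and the superadditivity built into every deficient topological measure (Remark~\ref{DTMsvva}) must be balanced against each other, mediated by inner regularity. The identification of $\mu$ with $m$ on closed sets and the regularity and finiteness bookkeeping are then routine.
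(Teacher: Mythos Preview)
Your arguments for parts~\ref{t1}--\ref{t3} match the paper's essentially word for word: the same references to Lemma~\ref{nutil}, Remark~\ref{varn}, and Lemma~\ref{HalmEzLe}, in the same order.

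For part~\ref{t4} you take a genuinely different route. The paper does not argue at all here: it simply invokes an external result (Theorem~34 of \cite{Butler:DTMLC}), which says that a deficient topological measure that is finitely subadditive on open sets extends to a Borel measure with the stated regularity, and then reads off the Radon and regular-Borel conclusions from parts~\ref{t2} and~\ref{t3}. You instead supply a full, self-contained Carath\'eodory construction: upgrade finite to countable subadditivity on $\mathscr{O}(X)$ via compactness, build the Method~I outer measure $\mu^{*}$ from $m$ on open sets, and then prove open sets are $\mu^{*}$-measurable by playing inner regularity and superadditivity of $m$ against each other. Your measurability argument is correct (the needed finiteness $m(U\cap V)\le m(U)<\infty$ holds because you chose $U$ with $m(U)<\mu^{*}(E)+\epsilon<\infty$), and the identification of $\mu$ with $m$ on $\mathscr{O}(X)\cup\mathscr{C}(X)$ follows cleanly from outer regularity on both sides. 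What you gain is independence from the companion paper; what the paper gains is brevity and a single point of maintenance for this extension result, which it reuses elsewhere. One small remark: your compact-finiteness argument via Lemma~\ref{easyLeLC} and property~\ref{p7} works for the general $\nu$ of the lemma's hypothesis, whereas your ``$\nu$ finite'' clause invokes ${\widetilde\nu}^{+}\le\nu$ from part~\ref{t2}, which requires $\nu$ to be a deficient topological measure; this matches how the paper reads the statement (it also cites~\ref{t2}), so there is no gap.
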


\begin{proof}
\begin{enumerate}[label=(\roman*),ref=(\roman*)]
\item
Follows from parts \ref{p1} and \ref{p6} of Lemma \ref{nutil} and Remark \ref{varn}.
\item 
By part \ref{p2} of Lemma \ref{nutil}, $\widetilde \nu \le \nu$ on $\mathscr{K}(X)$.
By Remark \ref{varn} ${\widetilde \nu}^+ \le \nu^{+} = \nu$. The rest is obvious.
\item 
Let $U, V$ be open and $C \subseteq U \cup V$, where $C$ is compact. By Lemma \ref{HalmEzLe},
$C = K \cup D$ where $K, D \in \mathscr{K}(X), \ \ K \subseteq U , \ \ D \subseteq V$. Since $\widetilde \nu$ is subadditive, 
$ \widetilde \nu(C) \le \widetilde \nu(K) + \widetilde \nu(D) \le {\widetilde \nu}^+(U)  + {\widetilde \nu}^+(V). $
Taking supremum over compact sets $C \subseteq U \cup V$ we obtain ${\widetilde \nu}^+(U \cup V )\le {\widetilde \nu}^+(U)  + {\widetilde \nu}^+(V).$
\item 
Follows from  parts \ref{t3}, \ref{t2} and~\cite[Theorem 34, Section 4]{Butler:DTMLC}.
\end{enumerate}
\end{proof}

The next Proposition gives the formula for $m$ on $\nu$-finite open sets when $\nu$ is a deficient topological measure.

\begin{proposition} \label{Unutilplu}
Let $X$ be locally compact, and  $\nu$ be a deficient topological measure on $X$. 
Then for any open set $U$ with $\nu(U)< \infty$  
\begin{align*}
m(U)  = {\widetilde \nu}^+(U) &= \inf \{ \sum_{i=1}^n \nu(A_i) : \ U \subseteq \bigcup_{i=1}^n A_i, \ A_i \in \mathscr{O}(X) \cup \mathscr{K}(X) \}  \\ 
& = \inf \{ \sum_{i=1}^n \nu(K_i) : \ U \subseteq \bigcup_{i=1}^n K_i, \ K_i \in \mathscr{O}(X) \} \\
& = \inf \{ \sum_{i=1}^n \nu(U_i) : \ U \subseteq \bigcup_{i=1}^n U_i, \ U_i \in \mathscr{O}(X) \}  \\
& = \inf \{ \sum_{i=1}^n \nu(U_i) : \ U= \bigcup_{i=1}^n U_i, \ U_i \in \mathscr{O}(X) \} 
\end{align*}
\end{proposition}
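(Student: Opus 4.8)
The plan is to show that the four displayed infima, which I denote from top to bottom by $I_1,I_2,I_3,I_4$, all coincide with $m(U)=\widetilde\nu^+(U)=\sup\{\widetilde\nu(K):K\in\mathscr K(X),\ K\subseteq U\}$. Note first that $m(U)\le\nu(U)<\infty$ by Lemma~\ref{Lnutilplu}\ref{t2}, so every quantity in sight is finite. The cheap comparisons come from enlarging or restricting the families of covers: every open cover of $U$ is in particular a cover by members of $\mathscr O(X)\cup\mathscr K(X)$, so $I_1\le I_3$ (and the second infimum, also taken over open covers, coincides with $I_3$); moreover, intersecting any subordinate open cover $\{U_i\}$ with $U$ yields an exact cover $U=\bigcup(U_i\cap U)$ with $\nu(U_i\cap U)\le\nu(U_i)$ by monotonicity (Remark~\ref{DTMsvva}), whence $I_3=I_4$. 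It therefore suffices to prove the two inequalities $m(U)\le I_1$ and $I_3\le m(U)$, which close the cycle $I_1\le I_3\le m(U)\le I_1$ and force all four infima to equal $m(U)$.

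For $m(U)\le I_1$ I would invoke that $m=\widetilde\nu^+$ is, by Lemma~\ref{Lnutilplu}\ref{t4}, a Borel measure (inner regular on open sets, outer regular), and that $m\le\nu$ on $\mathscr O(X)\cup\mathscr C(X)$ by Lemma~\ref{Lnutilplu}\ref{t2}. Given any cover $U\subseteq\bigcup_{i=1}^n A_i$ with $A_i\in\mathscr O(X)\cup\mathscr K(X)$, finite subadditivity of the measure $m$ gives $m(U)\le\sum_{i=1}^n m(A_i)\le\sum_{i=1}^n\nu(A_i)$, and taking the infimum over all such covers yields $m(U)\le I_1$.

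The substance is the reverse inequality $I_3\le m(U)$, and this is where the hypothesis $\nu(U)<\infty$ is indispensable. Fix $\epsilon>0$. By inner regularity \ref{DTM2} choose a compact $L\subseteq U$ with $\nu(L)>\nu(U)-\epsilon$. Since $L$ is closed, $U\setminus L$ is open, is disjoint from $L$, and $L\sqcup(U\setminus L)=U$; superadditivity of $\nu$ (Remark~\ref{DTMsvva}) then gives $\nu(U\setminus L)\le\nu(U)-\nu(L)<\epsilon$. Because $L\subseteq U$ is compact we have $\widetilde\nu(L)\le m(U)$, so by Definition~\ref{Dnutil} there are compact sets $K_1,\dots,K_n$ with $L\subseteq\bigcup_{j=1}^n K_j$ and $\sum_{j=1}^n\nu(K_j)<\widetilde\nu(L)+\epsilon\le m(U)+\epsilon$. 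Finally, outer regularity \ref{DTM3} lets me choose open $V_j\supseteq K_j$ with $\nu(V_j)<\nu(K_j)+\epsilon/n$, so that $\sum_{j=1}^n\nu(V_j)<m(U)+2\epsilon$. Now $\{U\setminus L,V_1,\dots,V_n\}$ is an open cover of $U$ (the sets $V_j$ already cover $L$), and its total value is at most $\nu(U\setminus L)+\sum_j\nu(V_j)<m(U)+3\epsilon$. Hence $I_3\le m(U)+3\epsilon$, and letting $\epsilon\to0$ gives $I_3\le m(U)$. The main obstacle is precisely this last construction: one must trap almost all of $\nu(U)$ on a compact set $L$, control the open ``tail'' $U\setminus L$ through superadditivity, and only then convert the near-optimal compact cover of $L$ supplied by $\widetilde\nu$ into an open cover of all of $U$ by means of outer regularity.
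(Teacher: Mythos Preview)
Your proof is correct and follows essentially the same route as the paper's: the chain of trivial inequalities among the infima, the identification $I_3=I_4$ via intersection with $U$, and---most importantly---the construction of the open cover $\{U\setminus L,V_1,\dots,V_n\}$ to establish $I_3\le m(U)$ are all exactly what the paper does. The only minor difference is in the easy direction $m(U)\le I_1$: the paper argues directly from the definition of $\widetilde\nu^+$ using Lemma~\ref{HalmEzLe} to split a compact $K\subseteq U$ along the cover $\{A_i\}$, whereas you invoke that $m$ is already a Borel measure (Lemma~\ref{Lnutilplu}\ref{t4}) together with $m\le\nu$; both are valid and equally short.
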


\begin{proof}
Let $ \alpha, \beta, \gamma, \delta$ be the right hand side, respectively,  of the first, second, third, and fourth line.
Then $ \alpha \le \beta \le  \gamma =\delta$, where $  \gamma =\delta$ by an argument similar to the one for part \ref{p5} 
of Lemma \ref{nutil}.   
 
We shall show that  ${\widetilde \nu}^+(U) \le \alpha$.
Suppose  $ U \subseteq \bigcup_{i=1}^n A_i, \ A_i \in \mathscr{O}(X) \cup \mathscr{K}(X)$. 
Take any compact $K \subseteq U$. Using Lemma \ref{HalmEzLe} we see that we may 
write $K= \bigcup_{i=1}^n K_i, \ K_i \subseteq A_i$. Then $ \widetilde \nu (K) \le \sum_{i=1}^n \nu(K_i)  \le \sum_{i=1}^n \nu(A_i)$. 
Taking supremum  over $K \subseteq U$ we obtain  ${\widetilde \nu}^+(U) \le \sum_{i=1}^n \nu(A_i)$.
Then  ${\widetilde \nu}^+(U) \le  \alpha$.

Now we shall show that $\gamma \le  {\widetilde \nu}^+(U)$. 
This inequality holds  trivially  if $  {\widetilde \nu}^+(U) = \infty$, so assume that $ {\widetilde \nu}^+(U) < \infty$.
Let $\epsilon>0$. Since $\nu(U) < \infty$, find $K \subseteq U$ such that $ \nu(U) - \nu(K) < \epsilon$, and then
$\nu(U \setminus K) < \nu(U) - \nu(K) < \epsilon$. 
For $K$ choose a cover $K = \bigcup_{i=1}^n K_i$ such that $  \sum_{i=1}^n \nu(K_i) -\widetilde \nu(K) < \epsilon$. 
For each $K_i$ choose $U_i \in \mathscr{O}(X)$ such that $ K_i \subseteq U_i$ and  $  \sum_{i=1}^n ( \nu(U_i) - \nu(K_i)) < \epsilon$.
Then the sets $\{ U\setminus K, U_1, \ldots, U_n\} $ form a cover of $U$, so 
$$ \gamma \le \nu(U \setminus K) + \sum_{i=1}^n \nu(U_i)  \le
\nu(U \setminus K) +  \sum_{i=1}^n \nu(K_i)  + \epsilon \le 
 \widetilde \nu(K)  + 3 \epsilon \le {\widetilde \nu}^+(U) + 3 \epsilon.$$
Thus,  $\gamma \le  {\widetilde \nu}^+(U)$, and the proof is complete.
\end{proof}

\begin{theorem} \label{finN}
Let $\nu$ be a deficient topological measure, and $m ={\widetilde \nu}^+$ be the measure from Lemma \ref{Lnutilplu}.
The following are equivalent:
\begin{enumerate}[label=(\alph{enumi})]
\item \label{sif1}
 $ \nu$ is singleton-finite.
\item \label{sif2}
$\widetilde \nu$ is compact-finite.
\item \label{sif3}
$m$ is compact-finite.
\end{enumerate}
\end{theorem}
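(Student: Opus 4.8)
The plan is to prove the three-way equivalence by establishing \ref{sif1} $\Leftrightarrow$ \ref{sif2} and \ref{sif2} $\Leftrightarrow$ \ref{sif3}. Throughout I would use three facts recorded earlier: $\nu$ is monotone (Remark~\ref{DTMsvva}); $\widetilde\nu$ is monotone, with $\widetilde \nu \le \widetilde\nu^+ = m$ on $\mathscr{K}(X)$ (part \ref{p3} of Lemma~\ref{nutil} together with Remark~\ref{varn}); and, crucially, the Remark preceding Example~\ref{NotsinfE}, which says that a deficient topological measure is singleton-finite if and only if it is locally finite. For \ref{sif1}$\Rightarrow$\ref{sif2} I would argue as follows. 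Assuming \ref{sif1}, $\nu$ is locally finite, so each point has an open neighborhood of finite $\nu$-value. Given $K\in\mathscr{K}(X)$, extract a finite subcover $U_{x_1},\dots,U_{x_n}$ with $\nu(U_{x_i})<\infty$, and use the iterated form of Lemma~\ref{HalmEzLe} to write $K=\bigcup_{i=1}^n K_i$ with $K_i\in\mathscr{K}(X)$, $K_i\subseteq U_{x_i}$. This is an admissible cover in Definition~\ref{Dnutil}, so $\widetilde\nu(K)\le\sum_{i=1}^n\nu(K_i)\le\sum_{i=1}^n\nu(U_{x_i})<\infty$ by monotonicity of $\nu$, giving \ref{sif2}.

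The reverse implication \ref{sif2}$\Rightarrow$\ref{sif1} is where the genuine content lies, and it is the step I expect to be the main obstacle: since $\widetilde\nu\le\nu$ and $m\le\nu$, finiteness of the smaller set functions does not obviously control $\nu$ from above, so one cannot simply invoke $m\le\nu$. I would instead argue by contraposition at a single point. If $\nu$ is not singleton-finite, pick $x_0$ with $\nu(\{x_0\})=\infty$ (recall $\nu\ge 0$). The key observation is that the infimum defining $\widetilde\nu(\{x_0\})$ ranges over compact covers of $\{x_0\}$, and every such cover $\{K_i\}$ has some $K_j\ni x_0$; by monotonicity $\nu(K_j)\ge\nu(\{x_0\})=\infty$, forcing $\sum_i\nu(K_i)=\infty$. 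Hence $\widetilde\nu(\{x_0\})=\infty$, and since $\{x_0\}\in\mathscr{K}(X)$, $\widetilde\nu$ fails to be compact-finite, which is the negation of \ref{sif2}. This reduction of $\widetilde\nu(\{x_0\})$ to an infimum over compacts all carrying value $\infty$ is the crux that circumvents the wrong-direction inequality.

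Finally, for \ref{sif2}$\Leftrightarrow$\ref{sif3}: the implication \ref{sif3}$\Rightarrow$\ref{sif2} is immediate from $\widetilde\nu\le m$ on $\mathscr{K}(X)$ (Remark~\ref{varn}), so $m(K)<\infty$ gives $\widetilde\nu(K)\le m(K)<\infty$. For \ref{sif2}$\Rightarrow$\ref{sif3}, given $K\in\mathscr{K}(X)$ I would apply Lemma~\ref{easyLeLC} (with $U=X$) to produce an open $V$ with $K\subseteq V\subseteq\overline V$ and $\overline V\in\mathscr{K}(X)$. Then, using that $m$ is a monotone Borel measure and the inner-regularity formula $m(V)=\widetilde\nu^+(V)=\sup\{\widetilde\nu(C):C\subseteq V,\,C\in\mathscr{K}(X)\}$ from Definition~\ref{laplu}, every competing $C$ satisfies $C\subseteq\overline V$, so $\widetilde\nu(C)\le\widetilde\nu(\overline V)$ by monotonicity of $\widetilde\nu$; therefore $m(K)\le m(V)\le\widetilde\nu(\overline V)<\infty$ by \ref{sif2}. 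Combining \ref{sif1}$\Leftrightarrow$\ref{sif2} with \ref{sif2}$\Leftrightarrow$\ref{sif3} completes the proof; as a byproduct one may also note the consistency with Lemma~\ref{Lnutilplu}\ref{t4}, since compact-finiteness of $m$ is exactly the condition making $m$ a Radon measure.
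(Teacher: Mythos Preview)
Your proof is correct and relies on the same ideas as the paper's: local finiteness plus a finite cover for \ref{sif1}$\Rightarrow$\ref{sif2}, the sandwich $m(K)\le m(V)\le\widetilde\nu(\overline V)$ via Lemma~\ref{easyLeLC} for \ref{sif2}$\Rightarrow$\ref{sif3}, and the equality $\widetilde\nu(\{x\})=\nu(\{x\})$ (your ``crux'', equivalently part~\ref{p5} of Lemma~\ref{nutil}) for the return direction. The paper merely organizes these as the cycle \ref{sif1}$\Rightarrow$\ref{sif2}$\Rightarrow$\ref{sif3}$\Rightarrow$\ref{sif1}, folding your \ref{sif3}$\Rightarrow$\ref{sif2} and \ref{sif2}$\Rightarrow$\ref{sif1} into a single step, whereas you prove two separate bi-implications.
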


\begin{proof}
\ref{sif1} $\Longrightarrow$ \ref{sif2}
Suppose  $ \nu$ is singleton-finite.  Let $K  \in \mathscr{K}(X)$. 
For each $x \in K$ let  $U_x \in \mathscr{O}(X)$ be such that $ x \in U_x $ and $ \nu(U_x) < \infty$. By Lemma \ref{easyLeLC} 
let $V_x \in \mathscr{O}(X)$ be such that 
$ x \in V_x \subseteq \overline{ V_x} \subseteq U_x$ and $ \overline{ V_x} $ is compact. Finitely many $V_x$ cover $K$. So 
$ K \subseteq \bigcup_{i=1}^n V_{x_i} \subseteq \bigcup_{i=1}^n \overline{V_{x_i}}$, and $ \sum_{i=1}^n \nu( \overline{V_{x_i}})  \le  \sum_{i=1}^n \nu(U_x) < \infty$.
Thus, $\widetilde \nu(K) < \ \infty$.  \\
\ref{sif2} $\Longrightarrow$ \ref{sif3}
For an arbitrary compact $C$ choose $W \in \mathscr{O}(X)$ with compact closure such that $ C \subseteq W \subseteq \overline{W} \subseteq X $ and see that 
$m(C) \le m(W)  = \sup\{ \widetilde \nu(D): D \subseteq W, \  D \in \mathscr{K}(X) \} \le \widetilde \nu(\overline{W}) < \infty$.
Thus, the measure $m$ is compact-finite. \\
\ref{sif3}  $\Longrightarrow$  \ref{sif1}
Suppose $m$ is compact-finite. Then for $ m(\{x\}) < \infty$. Since $0 \le \widetilde \nu \le m$ on $ \mathscr{K}(X)$, we have 
$0 \le  \widetilde \nu(\{x\}) < \infty$, and by part \ref{p5} of Lemma \ref{nutil} $\nu(\{x\})  = \widetilde \nu(\{x\}) < \infty$.
\end{proof} 

\begin{corollary} \label{maxRad}
Let $\nu$ be a singleton-finite deficient topological measure, and $m ={\widetilde \nu}^+$ be the measure from Lemma \ref{Lnutilplu}.
Then $m$ is the maximal of Radon measures that is majorised by $\nu$. 
\end{corollary}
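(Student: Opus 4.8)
The plan is to verify two things: first, that $m={\widetilde \nu}^+$ is genuinely a Radon measure majorized by $\nu$, so that it belongs to the family in question; and second, that it dominates every other Radon measure majorized by $\nu$. The second part is the substance of the statement, but it turns out to be short once the defining infimum of $\widetilde \nu$ is combined with the subadditivity of a measure.

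For the first part, I would note that part \ref{t2} of Lemma \ref{Lnutilplu} already gives ${\widetilde \nu}^+ \le \nu$, so $m$ is majorized by $\nu$. Since $\nu$ is singleton-finite, Theorem \ref{finN} yields that $m={\widetilde \nu}^+$ is compact-finite. Combining this with part \ref{t4} of Lemma \ref{Lnutilplu}, which tells us that $m$ is (the restriction of) a Borel measure that is inner regular on open sets and outer regular, compact-finiteness upgrades $m$ to a Radon measure. Hence $m$ lies in the collection of Radon measures majorized by $\nu$.

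For the main part, let $m'$ be an arbitrary Radon measure with $m' \le \nu$; I must show $m' \le m$. By Remark \ref{byCompacts} it suffices to prove $m'(K) \le m(K)$ for every $K \in \mathscr{K}(X)$. Fix such a $K$ and any finite cover $K \subseteq \bigcup_{i=1}^n K_i$ by compact sets $K_i$. Since $m'$ is a measure it is monotone and finitely subadditive, and since $m' \le \nu$ holds in particular on $\mathscr{K}(X)$, we get
$$ m'(K) \le \sum_{i=1}^n m'(K_i) \le \sum_{i=1}^n \nu(K_i). $$
Taking the infimum over all such covers and invoking Definition \ref{Dnutil} gives $m'(K) \le \widetilde \nu(K)$. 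Finally, by Remark \ref{varn} we have $\widetilde \nu \le {\widetilde \nu}^+ = m$ on $\mathscr{K}(X)$, whence $m'(K) \le m(K)$, and another appeal to Remark \ref{byCompacts} extends this to all of $\mathscr{O}(X) \cup \mathscr{C}(X)$.

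I do not expect a serious obstacle here; the entire content is the interplay between subadditivity of the measure $m'$ and the infimum-over-covers defining $\widetilde \nu$. The points requiring care are merely the reduction to compact sets via Remark \ref{byCompacts}, reading $m' \le \nu$ as a pointwise inequality of set functions on $\mathscr{O}(X) \cup \mathscr{C}(X)$ so that it applies on $\mathscr{K}(X)$, and correctly citing $\widetilde \nu \le {\widetilde \nu}^+$ on compact sets from Remark \ref{varn}. No finiteness difficulties arise, since all the displayed inequalities are valid in $[0,\infty]$ even when individual terms are infinite.
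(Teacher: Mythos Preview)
Your proof is correct and follows essentially the same approach as the paper: both use subadditivity of the competing Radon measure against the infimum defining $\widetilde\nu$, then pass through $\widetilde\nu \le {\widetilde\nu}^+$ and invoke Remark~\ref{byCompacts}. You are slightly more explicit than the paper in spelling out why $m$ itself is a Radon measure majorized by $\nu$ (citing parts \ref{t2} and \ref{t4} of Lemma~\ref{Lnutilplu} together with Theorem~\ref{finN}), but the substance is identical.
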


\begin{proof}
By Theorem \ref{finN} $m$ is compact-finite.
Suppose $\lambda$ is a Radon measure and  $\lambda \le \nu$. 
Take a compact $K$. 
If $K \subseteq \bigcup_{i=1}^n K_i$, where $K_i \in \mathscr{K}(X)$, then
$\lambda(K)  \le \sum_{i=1}^n \lambda(K_i) \le \sum_{i=1}^n \nu(K_i)$. So  
by Definition \ref{Dnutil} and Remark \ref{varn} $\lambda(K) \le \widetilde \nu(K) \le {\widetilde \nu}^+(K)$. 
By Remark \ref{byCompacts} $\lambda \le {\widetilde \nu}^+  $.  
\end{proof}

\section{Decompositions of deficient topological measures}

\begin{theorem} \label{adnlDTM}
Suppose  $\mu \le \nu$, where $\mu$ is a signed topological measure, and 
$\nu$ is a deficient topological measure.
Suppose one of $\mu, \nu$ is compact-finite and the other one is singleton-finite.

\begin{enumerate}[label=(\roman*),ref=(\roman*)]
\item \label{adnl1}
There exists a singleton-finite deficient topological measure $\lambda$ such that $ \lambda = \nu  - \mu$ on  $\mathscr{K}(X)$.
$ \lambda$ is compact-finite iff $\mu$ and $\nu$ are compact-finite.
\item \label{adnl2}
If $\mu^-(X) < \infty$ (in particular, if $ 0 \le \mu \le \nu$) then $\nu = \mu + \lambda$.
\item \label{adnl3}
If $ \mu, \nu$ have finite norms then $ \lambda$ is finite.
\end{enumerate}
\end{theorem}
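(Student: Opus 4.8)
The plan is to build $\lambda$ from the set function $\rho(K):=\nu(K)-\mu(K)$ on $\mathscr{K}(X)$ and to identify it with the positive variation $\rho^{+}$ of Definition \ref{laplu}. First I would check that $\rho$ is well defined and nonnegative on $\mathscr{K}(X)$: since $\mu\le\nu$ and $\nu\ge 0$, and since one of $\mu,\nu$ is compact-finite, the difference $\nu(K)-\mu(K)$ never produces $\infty-\infty$ and lies in $[0,\infty]$. Using \ref{DTM1} for $\nu$ and \ref{STM1} for $\mu$ one sees at once that $\rho$ is finitely additive on disjoint compact sets. Set $\lambda:=\rho^{+}$. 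By Remark \ref{varn}, $\lambda$ is a deficient topological measure and $\rho\le\lambda$ on $\mathscr{K}(X)$. Thus all of part \ref{adnl1} reduces to the reverse inequality $\lambda\le\rho$ on $\mathscr{K}(X)$, i.e. to showing that $\nu-\mu$ is already the restriction of a deficient topological measure.

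The heart of the argument is the estimate $\rho^{+}(U)\le\nu(U)-\mu(U)$ for open $U$. Fix an open $U$ and a compact $C\subseteq U$. Since $C$ is compact and $U\setminus C$ is open with $C\sqcup(U\setminus C)=U$, condition \ref{STM1} gives $\mu(U)=\mu(C)+\mu(U\setminus C)$, while superadditivity of the deficient topological measure $\nu$ (Remark \ref{DTMsvva}) gives $\nu(U)\ge\nu(C)+\nu(U\setminus C)$. Combined with $\mu\le\nu$ on the open set $U\setminus C$, this yields
\[
\rho(C)=\nu(C)-\mu(C)=\nu(C)-\mu(U)+\mu(U\setminus C)\le\nu(C)+\nu(U\setminus C)-\mu(U)\le\nu(U)-\mu(U).
\]
Taking the supremum over $C\subseteq U$ gives $\rho^{+}(U)\le\nu(U)-\mu(U)$. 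Restricting then to the cofinal family of neighborhoods $U\supseteq K$ with compact closure on which $\mu$ (when $\mu$ is compact-finite) or $\nu$ (when $\nu$ is compact-finite) is finite, I would pass to the infimum over $U$: since $\nu(K)=\inf_{U}\nu(U)$ by \ref{DTM3} and $\mu(K)=\lim_{U}\mu(U)$ by \ref{STM3} are limits over the \emph{same} directed set of neighborhoods, $\nu(U)-\mu(U)\to\nu(K)-\mu(K)$, whence $\lambda(K)=\rho^{+}(K)=\inf_{U\supseteq K}\rho^{+}(U)\le\nu(K)-\mu(K)=\rho(K)$. This is exactly the step I expect to be the main obstacle: the displayed chain must be read in the extended reals, and the genuinely delicate point is the bookkeeping of $\pm\infty$ (for instance ruling out $\mu(U)=+\infty$, which is precisely where the hypothesis that one of $\mu,\nu$ is compact-finite enters; the remaining infinite cases force $\rho^{+}(K)=\rho(K)=\infty$ directly, so equality persists).

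With $\lambda=\rho^{+}=\nu-\mu$ on $\mathscr{K}(X)$ in hand, the rest of \ref{adnl1} is immediate. Here $\lambda(\{x\})=\nu(\{x\})-\mu(\{x\})$ is finite because in either case both terms are finite at singletons, so $\lambda$ is singleton-finite; and since $\lambda(K)=\nu(K)-\mu(K)$, the measure $\lambda$ is compact-finite exactly when both $\mu$ and $\nu$ are, solving for the one not already finite in each case. For \ref{adnl3}, if $\|\mu\|,\|\nu\|<\infty$ then $|\lambda(K)|=|\nu(K)-\mu(K)|\le\|\nu\|+\|\mu\|$ for every compact $K$, so $\lambda(X)=\sup_{K}\lambda(K)\le\|\nu\|+\|\mu\|<\infty$ and $\lambda$ is finite.

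Finally, for \ref{adnl2} the assumption $\mu^{-}(X)<\infty$ means $\mu$ is bounded below, so $\mu$ never takes the value $-\infty$ and the sum $\mu+\lambda$ is a well-defined $(-\infty,\infty]$-valued set function that is finitely additive on compact sets and agrees with $\nu$ on $\mathscr{K}(X)$. I would then verify $\mu+\lambda=\nu$ on open and closed sets from \ref{STM2}, \ref{STM3}, \ref{DTM2}, \ref{DTM3}: on an open $U$, $\nu(U)=\sup_{C\subseteq U}\big(\mu(C)+\lambda(C)\big)$ equals $\mu(U)+\lambda(U)$ because the net $\mu(C)$ converges while $\lambda(C)$ increases and $\mu$ is bounded below, so the limits add; the closed case is analogous by outer regularity, and $\mu^{-}(X)<\infty$ is exactly what prevents an $\infty-\infty$. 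Equivalently, once $\mu+\lambda$ is seen to be a signed deficient topological measure, Remark \ref{byCompacts} identifies it with $\nu$ from their agreement on $\mathscr{K}(X)$. Either way $\nu=\mu+\lambda$.
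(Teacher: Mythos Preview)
Your approach is correct and closely parallels the paper's, though the packaging differs in a useful way. The paper defines $\lambda(K)=\nu(K)-\mu(K)$ on $\mathscr{K}(X)$, extends by inner regularity to $\mathscr{O}(X)$ and by outer regularity to $\mathscr{C}(X)$, and then separately verifies (a) monotonicity of $\lambda$ on $\mathscr{K}(X)$ and (b) consistency of the outer-regular definition with the original formula on compacts; both checks use the same device of picking an open $U\supseteq K$ on which $\nu$ and $\mu$ are $\epsilon$-close to their values on $K$ and invoking $\mu(U)=\mu(C)+\mu(U\setminus C)$ together with $\nu(U)\ge\nu(C)+\nu(U\setminus C)$. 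You instead set $\lambda=\rho^{+}$, so that the DTM axioms come for free from Remark~\ref{varn}, and reduce everything to the single inequality $\rho^{+}(U)\le\nu(U)-\mu(U)$, which you then pass to the infimum over neighborhoods. Your displayed chain is exactly the paper's computation, and your inequality does the work of both (a) and (b) at once, so your route is a modest streamlining of the same argument.

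One point to tighten: the phrase ``neighborhoods with compact closure on which $\mu$ is finite'' is not quite what you need. Compact-finiteness of $\mu$ does not by itself make $\mu(U)$ finite for open $U$ with compact closure. What actually gives you a cofinal family with all terms finite is: when $\rho(K)<\infty$ (the only case requiring work, since $\rho^{+}\ge\rho$ handles $\rho(K)=\infty$), both $\nu(K)<\infty$ and $\mu(K)\in\mathbb{R}$, so by \ref{DTM3} and \ref{STM3} one can choose $U$ with $\nu(U)<\nu(K)+\epsilon$ and $|\mu(U)-\mu(K)|<\epsilon$ simultaneously; on such $U$ every term in your chain is a real number and the rearrangement $\mu(C)=\mu(U)-\mu(U\setminus C)$ is legitimate. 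This is exactly the $\epsilon$-choice the paper makes. Parts \ref{adnl2} and \ref{adnl3} in your write-up match the paper's reasoning.
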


\begin{proof}
\begin{enumerate}[label=(\roman*),ref=(\roman*)]
\item 
We shall prove the statement in the case where $\mu$ is compact-finite, $ \nu$ is singleton-finite; 
the other case is similar.
Since $\mu$ is real-valued on $\mathscr{K}(X)$, we may define $\lambda$ as follows:
$$ \lambda(K) = \nu(K)  - \mu(K) \mbox{   on   } \mathscr{K}(X) ; $$  
$$ \lambda(U) = \sup \{ \lambda(K): K \subseteq U, \ \ K \in \mathscr{K}(X) \}  \mbox{   on   } \mathscr{O}(X) ;$$
$$ \lambda (F) = \inf \{ \lambda (U):  F \subseteq U, \ \ U \in \mathscr{O}(X) \}  \mbox{   on   } \mathscr{C}(X).$$

Note that  $\lambda$ is a  singleton-finite non-negative set function. For $K$ compact $\lambda(K) < \infty $ iff $ \nu(K) < \infty$.
So $ \lambda$  is compact-finite iff $ \mu$ and $ \nu$ are.
 
First we shall show that $\lambda$ is monotone on $\mathscr{K}(X)$. 
Suppose $ C \subseteq K, \ C, K \in \mathscr{K}(X)$.  
If $ \nu(K) = \infty$, then $ \lambda(K) = \infty \ge \lambda(C)$. 
Now assume that $ \nu(K) < \infty$.
For $ \epsilon >0$ there exists
$ U \in \mathscr{O}(X)$ such that $ K \subseteq U, \,  \nu(U) - \nu(K) < \epsilon$ and $|\mu(U) - \mu(K)| < \epsilon$. 
Note that  $\nu(U), \nu(U \setminus C), \nu(C), \mu(U), \mu(C) $ are real numbers,
$\mu(U) = \mu(C) + \mu(U \setminus C)$, and $ \nu(U) \ge \nu(U \setminus C) + \nu(C)$.
So we have: 
\begin{align*}
\lambda(K) &= \nu(K) -\mu(K) \ge \nu(U) - \epsilon  - \mu(U) - \epsilon   \\
&\ge \nu(U \setminus C) + \nu(C) - \mu(C) - \mu(U \setminus C) - 2 \epsilon \ge \lambda(C) - 2 \epsilon.  
\end{align*}
Thus,  $\lambda(K) \ge \lambda(C)$. 

Now we shall show that this definition is consistent, i.e. if $K$ is compact, $\lambda(K) = \nu(K) - \mu(K)$, and 
$ \lambda_1 (K) = \inf \{ \lambda (U):  K \subseteq U, \ \ U \in \mathscr{O}(X) \}$, then $\lambda(K) = \lambda_1(K)$.
For any $U \in \mathscr{O}(X)$ with $ K \subseteq U$ we have 
$\lambda(U) \ge \lambda(K) $ so $ \lambda_1(K) \ge \lambda(K)$. 
If $\nu(K) = \infty$, then $\lambda(K) = \infty$, and then $\lambda_1(K) = \infty = \lambda(K)$.
Now assume that $\nu(K)  < \infty$. For $\epsilon > 0$ choose $ U \in \mathscr{O}(X)$ such that $ K \subseteq U, \    
 \nu(U) - \nu(K) < \epsilon $, and $ |\mu(U) - \mu(K) | < \epsilon $. 
It is not hard to see  that $ \lambda(U) < \infty$. 
We may choose a compact $C$ such that $ K \subseteq C \subseteq U, \lambda(U) - \lambda(C) < \epsilon$, and $ | \mu(U) - \mu(C)| < \epsilon$.
Note that $| \nu(C) - \nu(K) | \le  2 \epsilon $ and $| \mu(C) - \mu(K) | \le  2 \epsilon $,
so $| \lambda(C) - \lambda(K)| \le  4 \epsilon$. 
Then 
\begin{align*}
\lambda_1(K) & \le \lambda(U) \le \lambda(C) + \epsilon \le |  \lambda(C) - \lambda(K)| + \lambda(K) + \epsilon \le \lambda(K) + 5 \epsilon
\end{align*} 
It follows that  $\lambda_1(K) \le \lambda(K)$. Thus, $\lambda(K) = \lambda_1(K)$.

Finite additivity of $\lambda$ on disjoint compact sets 
follows from the same property for $\nu$ and $\mu$.
Since outer regularity on closed sets and inner regularity on open sets is built in the definition of $\lambda$, 
we see that $\lambda$ is a deficient topological measure. 
\item
If $ \mu^-(X) < \infty$ then $\mu(A), \nu(A) \in (-\infty, \infty]$ for any $A \in \mathscr{O}(X) \cup \mathscr{C}(X)$.
Let $U$ be open. Checking  all possible cases (namely, $\mu(U), \nu(U) \in \mathbb{R}; \mu(U) \in \mathbb{R}, \nu(U) = \infty; \mu(U), \nu(U) = \infty$)
we see that $\nu(U) = \mu(U) + \lambda(U)$. Then it is not hard to check that $\nu(F) = \mu(F) + \lambda(F)$ for any $ F \in \mathscr{C}(X)$.
Thus, $\nu = \mu + \lambda$.
\item
Easy to see.
\end{enumerate}
\end{proof}

\begin{remark}
When $X$ is compact, $\mu \le \nu$, and both $\mu$ and $\nu$ are topological measures (necessarily finite), 
Wheeler~\cite[Proposition 3.4]{Wheeler},  proved that $\nu - \mu$ is also a topological measure by considering corresponding quasi-linear functionals.
In~\cite[Proposition 6]{Svistula:Signed}
the result was extended to prove that if $X$ is compact, $\mu$ is a signed topological measure with finite norm, and $\nu$ is a finite deficient
topological measure, then $\nu - \mu$  is a finite deficient topological measure. Svistula 
considered $\lambda = \nu - \mu$ on $\mathscr{O}(X) \cup \mathscr{C}(X)$ and proved the additivity and regularity of $ \lambda$.
When $X$ is locally compact, one can not prove Theorem \ref{adnlDTM} using this approach; it does not work 
even for topological measures if they are not  both finite.
\end{remark}
  
\begin{theorem} \label{PropDec}
Let $\nu$ be a singleton-finite deficient topological measure. There is the unique decomposition $\nu = m  + \nu'$,
where $m $ is the maximal Radon measure majorized by $\nu$, and $\nu'$ is a proper singleton-finite deficient topological measure. 
$\nu$ is compact-finite (respectively, finite) iff $\nu'$ and $m$ are. 
$\nu$ is a finite topological measure iff $\nu'$ is a finite proper topological measure and $m$ is a finite Radon measure. 
$\nu$ is a finite measure iff $\nu'$ is a finite proper measure and $m$ is a finite Radon measure.
\end{theorem}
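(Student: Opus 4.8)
The plan is to take $m = \widetilde\nu^+$ from Lemma \ref{Lnutilplu} as the Radon part. Since $\nu$ is singleton-finite, Theorem \ref{finN} guarantees that $m$ is compact-finite, hence a genuine Radon measure, and Corollary \ref{maxRad} identifies it as the maximal Radon measure majorized by $\nu$. To construct $\nu'$, I would invoke Theorem \ref{adnlDTM} with $\mu = m$: a Radon measure is a topological measure (Remark \ref{Vloz}), hence a signed topological measure, it is compact-finite, and $m \le \nu$ with $\nu$ singleton-finite, so the hypotheses are met. Part \ref{adnl1} then produces a singleton-finite deficient topological measure $\nu' = \lambda$ with $\nu' = \nu - m$ on $\mathscr{K}(X)$, and since $m \ge 0$ gives $m^-(X) = 0$, part \ref{adnl2} upgrades this to $\nu = m + \nu'$ on all of $\mathscr{O}(X) \cup \mathscr{C}(X)$.

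Next I would verify that $\nu'$ is proper, which by Remark \ref{properDtm} means that every Radon measure $p \le \nu' = |\nu'|$ is zero. Given such a $p$, the sum $m + p$ is again a Radon measure (Radon measures form a positive cone) and $m + p \le m + \nu' = \nu$; maximality of $m$ forces $m + p \le m$, and since $m$ is compact-finite we may cancel on each compact set to obtain $p(K) \le 0$, whence $p = 0$. This step is the cleanest, mirroring the positive-cone trick.

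For uniqueness, suppose $\nu = m_1 + \nu_1'$ with $m_1$ a Radon measure and $\nu_1'$ a proper singleton-finite deficient topological measure. Since $\nu_1' \ge 0$, we have $m_1 \le \nu$, so Corollary \ref{maxRad} gives $m_1 \le m$. I would then show that the nonnegative difference $r = m - m_1$ is itself a Radon measure; on $\mathscr{K}(X)$ one has $r = (\nu - m_1) - (\nu - m) = \nu_1' - \nu' \le \nu_1'$ (the case $\nu(K) = \infty$ being harmless, since then $\nu_1'(K) = \infty$ while $r(K)$ is finite), so $r \le \nu_1'$ everywhere by Remark \ref{byCompacts}. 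Properness of $\nu_1'$ then forces $r = 0$, i.e. $m_1 = m$, and consequently $\nu_1' = \nu - m = \nu'$. I expect the main obstacle to be precisely this point: verifying that the dominated difference $m - m_1$ of two compact-finite Radon measures is again a Radon measure (inner regularity on open sets and outer regularity of the difference), which can be carried out by realizing $m - m_1$ through the $\widetilde{(\cdot)}^+$ construction of Lemma \ref{Lnutilplu} applied to its compact restriction, together with careful bookkeeping of the extended-real values.

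Finally I would settle the four equivalences from $\nu = m + \nu'$. For compact-finiteness, $m$ is automatically compact-finite by Theorem \ref{finN}, so $\nu(K) = m(K) + \nu'(K)$ shows that $\nu$ is compact-finite iff $\nu'$ is (also immediate from part \ref{adnl1}); finiteness follows the same way from the identity evaluated at $X$. If $\nu$ is a finite topological measure, then $m = \widetilde\nu^+$ is a finite Radon measure by Lemma \ref{Lnutilplu}\ref{t4}, and $\nu' = \nu - m$ holds everywhere with all values finite; since $m$ is a genuine measure it is additive on all disjoint Borel sets, so \ref{TM1} for $\nu$ transfers to $\nu'$, making $\nu'$ a finite proper topological measure, and conversely the sum of a finite Radon measure and a finite proper topological measure is a finite topological measure, again because the measure part restores full additivity. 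For the measure case, if $\nu$ is a finite measure then subadditivity gives $\widetilde\nu = \nu$ on $\mathscr{K}(X)$ and inner and outer regularity give $m = \widetilde\nu^+ = \nu$, so $\nu' = 0$; conversely a finite proper measure majorizes itself by a Radon measure and hence must vanish, leaving $\nu = m$ a finite Radon measure.
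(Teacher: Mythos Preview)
Your construction of $m=\widetilde\nu^+$ via Corollary~\ref{maxRad}, the appeal to Theorem~\ref{adnlDTM} to produce $\nu'$, the properness argument via maximality of $m$, and the handling of the compact-finite/finite/topological-measure/measure equivalences all match the paper's proof essentially verbatim.

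The one place where you diverge is uniqueness, and there you are working harder than the theorem requires. The statement fixes $m$ to be \emph{the} maximal Radon measure majorized by $\nu$; once $m$ is so specified it is unique, and then $\nu'$ is determined on $\mathscr{K}(X)$ (hence everywhere). That is all the paper claims and all its one-line proof does. What you are attempting instead---uniqueness among \emph{all} decompositions $\nu=m_1+\nu_1'$ with $m_1$ Radon and $\nu_1'$ proper---is exactly the content of the separate Corollary~\ref{PropDecCor}, which the paper states only for finite $\nu$ precisely so that $m-m_1$ is automatically a (finite) Radon measure. The obstacle you flag is therefore real but irrelevant to Theorem~\ref{PropDec} as stated; drop the general $m_1$ and the proof is complete. (For the record, your obstacle is surmountable: the difference $r=m-m_1$ on $\mathscr{K}(X)$ is monotone and finitely additive, and one checks directly from $m\ge m_1$ on Borel sets that it is subadditive on compact covers, so its positive-variation extension is a compact-finite Radon measure by the same mechanism as in Lemma~\ref{Lnutilplu}\ref{t4}; the $\widetilde{(\cdot)}^+$ detour you propose is not needed.)
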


\begin{proof}
As in Corollary \ref{maxRad}, let $m ={\widetilde \nu}^+$ be the maximal Radon measure majorized by $\nu$.
By Theorem \ref{adnlDTM} there exists a singleton-finite deficient topological measure $\nu'$ such that $\nu = m + \nu'$. 
Note that $\nu'$ is proper, otherwise, $m$ would not be the maximal Radon measure majorized by $\nu$, 
which contradicts Corollary \ref{maxRad}.  
Uniqueness of decomposition follows from the maximality of $m$ among Radon measures majorized by $\nu$.

If $ \nu$ is compact-finite (respectively, finite), then by Lemma \ref{Lnutilplu} so is $m$, 
and then by Theorem \ref{adnlDTM} so is $ \nu'$.  

If $\nu$  is a finite topological measure (respectively, a finite measure) then $ m$ is a finite measure, and it is easy to check that
$\nu' = \nu - m$ is a finite topological measure  (respectively, a finite measure).
The rest of the claims are easy.
\end{proof}

\begin{corollary} \label{PropDecCor}
Let $\nu$ be a finite deficient topological measure. There is the unique decomposition $\nu = m  + \nu'$,
where $m $ is a finite Radon measure, and $\nu'$ is a finite proper deficient topological measure. 
\end{corollary}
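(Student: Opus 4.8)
The plan is to obtain this as a direct specialization of Theorem \ref{PropDec}. The first step is the observation that a finite deficient topological measure is automatically singleton-finite: by monotonicity (Remark \ref{DTMsvva}) we have $\nu(\{x\}) \le \nu(X) < \infty$ for every $x \in X$, and in fact $\nu(K) \le \nu(X) < \infty$ for every compact $K$, so $\nu$ is even compact-finite. Hence the hypotheses of Theorem \ref{PropDec} are met, and that theorem immediately supplies the decomposition $\nu = m + \nu'$ in which $m = \widetilde\nu^+$ is the maximal Radon measure majorized by $\nu$ (Corollary \ref{maxRad}) and $\nu'$ is a proper singleton-finite deficient topological measure.

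For the existence half of the corollary it then remains only to upgrade the qualifier ``singleton-finite'' to ``finite,'' and this is handed to me by the equivalence already recorded in Theorem \ref{PropDec}: since $\nu$ is finite, both $m$ and $\nu'$ are finite. A finite Radon measure is precisely what the corollary asks of $m$, and a finite proper deficient topological measure is precisely what it asks of $\nu'$; the adjective ``singleton-finite'' on $\nu'$ becomes redundant once finiteness is known, since a finite deficient topological measure is compact-finite and hence singleton-finite.

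For uniqueness the key point I would isolate is that every decomposition of the type demanded by the corollary is already a decomposition of the type governed by Theorem \ref{PropDec}. Indeed, a finite Radon measure is a Radon measure, and a finite proper deficient topological measure is compact-finite, hence singleton-finite, hence a proper singleton-finite deficient topological measure. Thus any competing decomposition $\nu = m_1 + \nu_1'$ of the corollary's form is, verbatim, a decomposition in the sense of Theorem \ref{PropDec}, and the uniqueness proved there forces $m_1 = m$ and $\nu_1' = \nu'$.

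I do not expect a genuine obstacle here, since the entire analytic weight has already been carried by Theorem \ref{PropDec} and Corollary \ref{maxRad}; the corollary is essentially a relabelling in the finite regime. If one wished to argue uniqueness directly rather than by reduction, the mechanism is transparent and is the only place where anything must be checked: from $\nu = m_1 + \nu_1'$ one gets $m_1 \le \nu$, so $m_1 \le m$ by maximality (Corollary \ref{maxRad}); the difference $m - m_1$ is then a finite Radon measure with $m - m_1 \le \nu - m_1 = \nu_1'$ on $\mathscr{K}(X)$, hence on all of $\mathscr{O}(X) \cup \mathscr{C}(X)$ by Remark \ref{byCompacts}; and properness of $\nu_1'$ (with $|\nu_1'| = \nu_1'$ by Remark \ref{properDtm}) forces $m - m_1 = 0$, so $m_1 = m$ and $\nu_1' = \nu'$. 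The only subtlety worth watching is that the subtraction $m - m_1$ be a bona fide finite Radon measure, which is guaranteed by the finiteness of both summands together with $m_1 \le m$.
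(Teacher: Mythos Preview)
Your proposal is correct, and your direct uniqueness argument is exactly the paper's own proof: use Corollary~\ref{maxRad} to get $m_1 \le m$, observe $0 \le m - m_1 \le \nu_1'$, and invoke properness of $\nu_1'$ to conclude $m = m_1$. The existence portion also matches, drawing directly on Theorem~\ref{PropDec}.

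One small caveat on your first uniqueness argument (the reduction): the uniqueness clause in Theorem~\ref{PropDec} is phrased with $m$ being \emph{the} maximal Radon measure majorized by $\nu$, not an arbitrary Radon measure. So a competing decomposition $\nu = m_1 + \nu_1'$ with $m_1$ merely ``a finite Radon measure'' is not, verbatim, an instance of that theorem's decomposition hypothesis, and you cannot immediately cite its uniqueness. This is presumably why the paper gives the direct argument in the corollary rather than simply invoking Theorem~\ref{PropDec}. Since you supply that direct argument anyway, no gap remains.
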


\begin{proof}
By Theorem \ref{PropDec} $\nu =  m  + \nu'$, where $m$ is a  finite Radon measure 
and $\nu'$ is a finite proper deficient topological measure. 
To show that the decomposition is unique, assume that $\nu =  m  + \nu' = l + \lambda'$, where $m, l$ are finite Radon measures 
and $\nu', \lambda'$ are  finite proper deficient topological measures. 
Since $m$ is the maximal of Radon measures majorized by $\nu$, we see that $m-l \ge0$, and $ 0 \le m-l \le \lambda'$. 
Since $\lambda'$ is proper and  $m-l $ is a Radon measure, we have: $m -l = 0$. Then $ \lambda' = \nu'$ on $\mathscr{K}(X)$, and so $ \lambda' = \nu'$.
\end{proof}

\begin{theorem} \label{epsProper}
\begin{enumerate}[label=(\Roman*),ref=(\Roman*)]
\item 
A singleton-finite deficient topological measure $\nu$ is proper iff 
given a compact $ K$ and $\epsilon >0$, there are finitely many compact sets $K_1, \ldots, K_n$ 
such that $K = \bigcup_{i=1}^n K_i$ and $\sum_{i=1}^n \nu(K_i) < \epsilon $.
\item \label{finPdtmX}
A finite deficient topological measure is proper iff given $\epsilon >0$, there are finitely many open sets $U_1, \ldots, U_n$ 
such that $X = \bigcup_{i=1}^n U_i$ and $\sum_{i=1}^n \nu(U_i) < \epsilon $.
\end{enumerate}
\end{theorem}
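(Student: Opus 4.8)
The plan is to reduce both statements to a single fact: for a singleton-finite deficient topological measure $\nu$, the measure $m = {\widetilde \nu}^+$ from Lemma \ref{Lnutilplu} is the maximal Radon measure majorized by $\nu$ (Corollary \ref{maxRad}). Consequently $\nu$ is proper if and only if $m = 0$. Indeed, if $\nu$ is proper, then the Radon measure $m \le \nu$ must vanish; conversely, if $m = 0$, then every Radon measure $m' \le \nu$ satisfies $m' \le m = 0$ by maximality, so $m' = 0$ and $\nu$ is proper. Thus the whole task is to recognize the two covering conditions as restatements of ``$m = 0$''.

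For part \ref{sif1} I would first observe that the compact-cover condition is exactly the assertion that $\widetilde \nu(K) = 0$ for every compact $K$. By part \ref{p5} of Lemma \ref{nutil} we have $\widetilde \nu(K) = \inf \{ \sum_{i=1}^n \nu(C_i) : K = \bigcup_{i=1}^n C_i, \ C_i \in \mathscr{K}(X) \}$, so the existence for each $\epsilon > 0$ of a compact cover $K = \bigcup_i K_i$ with $\sum_i \nu(K_i) < \epsilon$ is equivalent to this infimum being $0$. It then remains to check that $\widetilde \nu \equiv 0$ on $\mathscr{K}(X)$ if and only if $m = 0$. The forward direction is immediate from the definition of $m = {\widetilde \nu}^+$: if $\widetilde \nu$ vanishes on all compact sets, then ${\widetilde \nu}^+(U) = \sup \{ \widetilde \nu(C) : C \subseteq U \} = 0$ on open sets, whence ${\widetilde \nu}^+(F) = 0$ on closed sets, so $m = 0$. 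The reverse direction uses $0 \le \widetilde \nu \le {\widetilde \nu}^+$ on $\mathscr{K}(X)$ (Remark \ref{varn}): if $m = 0$ then $\widetilde \nu(K) \le {\widetilde \nu}^+(K) = 0$ for each compact $K$. (That $m$ is determined by its values on $\mathscr{K}(X)$ is Remark \ref{byCompacts}.)

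For part \ref{finPdtmX}, a finite deficient topological measure is in particular singleton-finite, so the reduction above applies and $\nu$ is proper iff $m = 0$. Since $\nu(X) < \infty$, Proposition \ref{Unutilplu} applied to the open set $U = X$ gives $m(X) = \inf \{ \sum_{i=1}^n \nu(U_i) : X = \bigcup_{i=1}^n U_i, \ U_i \in \mathscr{O}(X) \}$, and this infimum equals $0$ precisely when the stated open-cover condition holds. Finally, because $m$ is a finite monotone measure and every set in $\mathscr{O}(X) \cup \mathscr{C}(X)$ is contained in $X$, we have $m(X) = 0$ iff $m = 0$. Chaining these equivalences yields part \ref{finPdtmX}.

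I do not anticipate a serious obstacle: the construction of $\widetilde \nu$ and $m$ together with Corollary \ref{maxRad} and Proposition \ref{Unutilplu} already carry the weight. The only points needing care are the bookkeeping that $m$ vanishes iff it vanishes on compacts (part \ref{sif1}) or on $X$ (part \ref{finPdtmX}), and ensuring that the finiteness hypothesis in part \ref{finPdtmX} is exactly what licenses applying Proposition \ref{Unutilplu} to $U = X$.
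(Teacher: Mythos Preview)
Your proposal is correct and follows essentially the same route as the paper: reduce ``$\nu$ proper'' to ``$m={\widetilde\nu}^+=0$'', then identify the two covering conditions with $\widetilde\nu\equiv 0$ on $\mathscr{K}(X)$ via Lemma~\ref{nutil}\ref{p5} for part~(I), and with $m(X)=0$ via Proposition~\ref{Unutilplu} for part~(II). The only minor difference is that the paper cites Theorem~\ref{PropDec} for the equivalence ``proper $\Leftrightarrow m=0$'', whereas you argue it directly from Corollary~\ref{maxRad}; your version is slightly more elementary since it avoids the decomposition theorem. (One small cosmetic point: your reference ``part~\ref{sif1}'' points to a label in Theorem~\ref{finN}, not to part~(I) of the present theorem, which carries no label.)
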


\begin{proof}
By  Theorem \ref{PropDec} a deficient topological measure is proper iff $m = {\widetilde \nu}^+ =0$, 
where measure $m$ is as in Corollary \ref{maxRad}.
\begin{enumerate}[label=(\Roman*),ref=(\Roman*)]
\item
Then $m(U) = 0$ for any open set $U$, i.e. $\widetilde \nu (K) =0$ for any compact $K$. 
The statement now follows  from part \ref{p5} of Lemma \ref{nutil}.
\item
Then $m(X) = {\widetilde \nu}^+(X) =0$, and the statement now follows from Proposition \ref{Unutilplu}.
\end{enumerate}
\end{proof}

\begin{remark}
According to Proposition \ref{Unutilplu}, in part \ref{finPdtmX} of Theorem \ref{epsProper} 
one may take sets from $\mathscr{O}(X) \cup \mathscr{K}(X)$ or from $\mathscr{K}(X)$ instead of open sets.
\end{remark}

\begin{remark} \label{PropDefEpsi}
By Remark \ref{properDtm} and Theorem \ref{epsProper},
a signed singleton-finite deficient topological measure $\nu$  is proper iff 
given a compact $K$ and $\epsilon >0$, there are finitely many compact sets $K_1, \ldots, K_n$ 
such that $K = \bigcup_{i=1}^n K_i$ and $\sum_{i=1}^n |\nu| (K_i) < \epsilon $.
A finite signed deficient topological measure $\nu$  is proper iff 
given $\epsilon >0$ there are finitely many open sets $U_1, \ldots, U_n$ 
such that $X = \bigcup_{i=1}^n U_i$ and $\sum_{i=1}^n |\nu| (U_i) < \epsilon $.
\end{remark}

\begin{theorem} \label{sumPropDTM}
The sum of two proper deficient topological measures is a proper deficient topological measure.
\end{theorem}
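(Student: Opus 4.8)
The plan is to work directly from the definition of properness, with the heart of the argument being a disjointification trick that lets superadditivity do the work that subadditivity cannot. First I would record that $\nu:=\nu_1+\nu_2$ is again a deficient topological measure: finite additivity on disjoint compacts is immediate, and since both summands are nonnegative and monotone the suprema and infima defining inner and outer regularity add (for open $U$, one gets $\sup_{C\subseteq U}(\nu_1+\nu_2)(C)=\nu_1(U)+\nu_2(U)$ by realizing the join with $C_1\cup C_2$, and dually on closed sets). Because $|\nu|=\nu$, by Definition \ref{properSDTM} and Remark \ref{properDtm} it suffices to show that every Radon measure $m$ with $m\le\nu_1+\nu_2$ is zero, and by inner regularity of $m$ it is enough to prove $m(K)=0$ for each compact $K$. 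The fact I would extract from properness of each $\nu_i$ is that $\widetilde{\nu_i}\equiv0$ on $\mathscr{K}(X)$ (equivalently $\widetilde{\nu_i}^+=0$): in the singleton-finite case this is exactly the content of Theorem \ref{epsProper} together with Corollary \ref{maxRad}, and the general case reduces to it, since an infinite point mass of $\widetilde{\nu_i}^+$ would force $\delta_x\le\nu_i$. Thus for every compact $L$ and every $\delta>0$ there are finitely many compact sets covering $L$ whose $\nu_i$-sum is below $\delta$.

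Fix a compact $K$ and $\epsilon>0$, and split $\epsilon=\delta_1+\delta_2+\eta$. Using $\widetilde{\nu_1}(K)=0$ and $\widetilde{\nu_2}(K)=0$ I would choose compact covers $K=\bigcup_{k=1}^{p}A_k$ with $\sum_k\nu_1(A_k)<\delta_1$ and $K=\bigcup_{l=1}^{q}B_l$ with $\sum_l\nu_2(B_l)<\delta_2$. The common refinement cells $E_{kl}=A_k\cap B_l$ are compact and still cover $K$. Now I would disjointify: linearly ordering the index pairs, set $G_{kl}=E_{kl}\setminus\bigcup_{(k',l')<(k,l)}E_{k'l'}$, obtaining pairwise disjoint Borel sets with $G_{kl}\subseteq E_{kl}$ and $\bigsqcup_{kl}G_{kl}=\bigcup_{kl}E_{kl}\supseteq K$. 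Since $m$ is compact-finite, each $m(G_{kl})<\infty$, so inner regularity of $m$ yields pairwise disjoint compact sets $C_{kl}\subseteq G_{kl}$ with $\sum_{kl}\big(m(G_{kl})-m(C_{kl})\big)<\eta$. Because the $G_{kl}$ cover $K$ disjointly, $m(K)\le\sum_{kl}m(G_{kl})\le\sum_{kl}m(C_{kl})+\eta$, and as each $C_{kl}$ is compact with $m\le\nu_1+\nu_2$ there, $m(K)\le\sum_{kl}\nu_1(C_{kl})+\sum_{kl}\nu_2(C_{kl})+\eta$.

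The decisive step is to bound the two double sums by \emph{superadditivity} rather than monotonicity. The $C_{kl}$ are pairwise disjoint; fixing $k$, the family $\{C_{kl}\}_l$ consists of disjoint compact subsets of $A_k$, so Remark \ref{DTMsvva} gives $\sum_l\nu_1(C_{kl})\le\nu_1(A_k)$, whence $\sum_{kl}\nu_1(C_{kl})\le\sum_k\nu_1(A_k)<\delta_1$. Symmetrically, fixing $l$, the $\{C_{kl}\}_k$ are disjoint compact subsets of $B_l$, so $\sum_{kl}\nu_2(C_{kl})\le\sum_l\nu_2(B_l)<\delta_2$. Therefore $m(K)<\delta_1+\delta_2+\eta=\epsilon$, and letting $\epsilon\to0$ gives $m(K)=0$; since $K$ was arbitrary and $m$ is inner regular, $m=0$, so $\nu_1+\nu_2$ is proper.

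I expect the main obstacle to be precisely the treatment of the overlaps of the two covers. A naive refinement estimate bounds $\sum_{kl}\nu_i(A_k\cap B_l)$ only through monotonicity, which injects a factor equal to the cardinality of the \emph{other} cover; refining one cover to shrink its own sum then enlarges the other error, so the two estimates fight and no ordering of the choices closes the argument. Since deficient topological measures are superadditive but \emph{not} subadditive, the overlaps genuinely matter, and the resolution is to disjointify the refinement cells against the finite reference measure $m$ (paying only $\eta$ in $m$-mass) and then invoke superadditivity on honestly disjoint compact pieces, which removes the spurious combinatorial factor. A secondary, more routine point that I would verify carefully is the reduction $\widetilde{\nu_i}\equiv0$ for a proper $\nu_i$ that need not be singleton-finite, together with the standard inner regularity of a Radon measure on Borel sets of finite measure used to produce the $C_{kl}$.
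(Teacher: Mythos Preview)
Your argument is correct and is essentially the paper's own proof: take small compact covers of $K$ for each summand via Theorem~\ref{epsProper}, pass to a disjoint common refinement, approximate the cells from within by disjoint compact sets using inner regularity of the Radon measure, and then use finite additivity/superadditivity to collapse $\sum_{kl}\nu_i(C_{kl})$ back to the original cover sums. The only cosmetic difference is the order of operations (the paper disjointifies each cover first and then intersects, whereas you intersect first and then disjointify), and your explicit remark that a proper deficient topological measure is automatically singleton-finite (since otherwise $\delta_x\le\nu_i$) is a point the paper's proof simply assumes in its opening line.
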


\begin{proof}
Let $\nu, \ \mu$ be proper singleton-finite deficient topological measures. 
Suppose  $\lambda$ is a Radon measure and $\lambda \le \nu + \mu$.
We may consider $\lambda$ on Borel subsets of $X$. 
Let $ K \in \mathscr{K}(X)$ and $\epsilon>0$. By Theorem \ref{epsProper}, let $\{K_i\}_{i=1}^n$ and 
$\{D_j\}_{j=1}^m$ be such that $K  = \bigcup_{i=1}^n K_i = \bigcup_{j=1}^m D_j, \ \sum_{i=1}^n \nu(K_i) < \epsilon, \ 
\sum_{j=1}^m \mu(D_j) < \epsilon, \ K_i, D_j \in \mathscr{K}(X)$. 
Set $A_1 = K_1, A_2 = K_2 \setminus K_1, \ldots, A_n = K_n \setminus \bigcup_{i=1}^{n-1} K_i$. Similarly, let  
$B_1 = D_1, B_2 = D_2 \setminus D_1, \ldots, B_m = D_m \setminus \bigcup_{j=1}^{m-1} D_j$. Then $ A_i \subseteq K_i, \ B_j \subseteq D_j$, 
the sets $A_i$ are disjoint, the sets $B_i$ are disjoint, and 
$K = \bigsqcup_{i=1}^n A_i = \bigsqcup_{j=1}^m B_j$.
Then $K = \bigsqcup_{i,j} A_i \cap B_j, \ A_i = \bigsqcup_{j=1}^m A_i \cap B_j, \ B_i =  \bigsqcup_{i=1}^n A_i \cap B_j.$ 
Note that $\lambda(A_i \cap B_j) \le \lambda(K_i \cap D_j)  \le \nu(K_i) + \mu(D_j) < \infty$. 
Since $\lambda$ is a Radon measure, it is inner regular on $\sigma$-finite sets (see, for example,~\cite[$\S$7.2]{Folland}). 
Thus, given $ \epsilon>0$, we may choose compact sets $C_{ij} \subseteq A_i \cap B_j$ for $i=1, \ldots, n, \ j =1, \ldots, m$
such that 
$$\sum_{i,j} \lambda(C_{ij} ) > \sum_{i,j} \lambda (A_i \cap B_j) - \epsilon = \lambda(K) - \epsilon.$$
Also, 
\begin{align*}
\sum_{i,j}  \lambda(C_{ij} ) &\le \sum_{i,j} \nu (C_{ij} )  + \sum_{i,j} \mu (C_{ij} )  
=  \sum_{i=1}^n  \nu(\bigsqcup_{j=1}^m (C_{ij} ) + \sum_{j=1}^m  \mu(\bigsqcup_{i=1}^n (C_{ij} ) \\
&\le \sum_{i=1}^n  \nu (K_i) + \sum_{j=1}^m  \mu(D_j) < 2 \epsilon.
\end{align*}
Then $\lambda(K) < 3 \epsilon$.  It follows that $\lambda = 0$ on $ \mathscr{K}(X)$. (This proof is adapted from~\cite[Theorem 3]{Svistula:Signed}).
Therefore, $\lambda = 0$, and $\nu  + \mu$ is a proper deficient topological measure.  
\end{proof}

\begin{remark} \label{propDTMcone}
The sum of two singleton-finite proper deficient topological measures is a singleton-finite proper deficient topological measure.

Clearly, if $\nu$ is a  proper deficient topological measure, then so is $ a \nu$ for any positive $a$. 
Using Theorem \ref{sumPropDTM} we see that
the set of all semifinite proper deficient topological measures is a positive cone.
\end{remark}

\begin{theorem} \label{pravnerM}
Suppose $\mu$ is a  topological measure, $\nu$ is a deficient topological measure, one of $ \mu, \nu$ is singleton-finite, and 
the other is compact-finite.
As in Theorem \ref{PropDec}, let  $\mu = m + \mu', \  \nu = n + \nu_1' $ be decompositions, 
where  $ \mu', \nu'$ are proper singleton-finite deficient topological measures, and  
$m, n$ are maximal Radon measures majorized by $\mu$ and $ \nu$ respectively.
If $ \mu \le \nu$ then $m \le n$.  If  $\mu \le \nu$ and $\nu$ is finite then $m \le n$ and $ \mu' \le \nu'$.
\end{theorem}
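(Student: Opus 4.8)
The plan is to treat the two assertions separately, because the inequality $m \le n$ holds with no finiteness assumption and is essentially immediate, whereas $\mu' \le \nu'$ is the real content and is where the hypothesis that $\nu$ is finite gets used. First I would record that by Remark \ref{sifNOTfncmp} a compact-finite set function is singleton-finite, so under the hypotheses \emph{both} $\mu$ and $\nu$ are singleton-finite deficient topological measures; hence the decompositions of Theorem \ref{PropDec} exist, and by Corollary \ref{maxRad} the measures $m = \widetilde{\mu}^+$ and $n = \widetilde{\nu}^+$ are the maximal Radon measures majorized by $\mu$ and $\nu$, respectively. For $m \le n$ I would argue directly: since $m$ is majorized by $\mu$ and $\mu \le \nu$, the Radon measure $m$ is majorized by $\nu$, so by the maximality of $n$ (Corollary \ref{maxRad}) we conclude $m \le n$.

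For the second assertion I would assume in addition that $\nu$ is finite. Then $0 \le \mu \le \nu$ forces $\mu$ to be finite, and by Theorem \ref{PropDec} all four of $m,n,\mu',\nu'$ are finite. Since a topological measure is nonnegative we have $\mu^-(X)=0$, so Theorem \ref{adnlDTM} produces a finite deficient topological measure $\lambda = \nu - \mu$ with $\nu = \mu + \lambda$. I would then apply Corollary \ref{PropDecCor} to $\lambda$ itself, writing $\lambda = \ell + \lambda''$ where $\ell$ is a finite Radon measure (the maximal one majorized by $\lambda$) and $\lambda''$ is a finite proper deficient topological measure.

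The crux is a regrouping argument. Substituting the three decompositions into $\nu = \mu + \lambda$ yields $\nu = (m+\ell) + (\mu' + \lambda'')$. Here $m+\ell$ is again a finite Radon measure, because Radon measures form a positive cone, and $\mu' + \lambda''$ is a finite proper deficient topological measure by Theorem \ref{sumPropDTM} together with Remark \ref{propDTMcone}. By the uniqueness of the Radon-plus-proper decomposition (Corollary \ref{PropDecCor}), this must coincide with $\nu = n + \nu'$, forcing $n = m + \ell$ and $\nu' = \mu' + \lambda''$. In particular $\nu' = \mu' + \lambda'' \ge \mu'$, which is exactly the desired inequality, and $n = m + \ell \ge m$ recovers $m \le n$.

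The step I expect to be the main obstacle is avoiding the naive route. Comparing $\mu'$ and $\nu'$ pointwise on compacta reduces, after cancellation, to $(n-m)(K) \le (\nu-\mu)(K)$, which is logically equivalent to the goal itself and hence circular; pointwise monotonicity of the ingredients does not decide it. The resolution is to route the comparison through the difference $\lambda = \nu - \mu$ and to exploit two facts that together break the circularity: that proper deficient topological measures are closed under addition (Theorem \ref{sumPropDTM}) and that the Radon-plus-proper decomposition is unique (Corollary \ref{PropDecCor}). Recognizing that this pairing of results is what carries the argument is the key insight.
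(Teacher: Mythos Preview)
Your proof is correct and follows essentially the same route as the paper: form $\lambda = \nu - \mu$ via Theorem \ref{adnlDTM}, decompose $\lambda$, regroup $\nu = (m+\ell) + (\mu'+\lambda'')$, and invoke closure of proper deficient topological measures under addition (Theorem \ref{sumPropDTM}) together with uniqueness (Corollary \ref{PropDecCor}) to identify the pieces. The only difference is that for the bare inequality $m \le n$ you use the cleaner direct observation that $m \le \mu \le \nu$ already exhibits $m$ as a Radon measure majorized by $\nu$, whereas the paper runs the full regrouping argument in the general (non-finite) case as well and deduces $m + l \le n$ from the maximality of $n$; your shortcut is a bit more elegant for that step, while the paper's version sets up the finite case with no additional work.
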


\begin{proof}
By Theorem \ref{adnlDTM} write $ \nu = \mu + \lambda$, where $ \lambda$  is a singleton-finite deficient topological measure.
By Theorem \ref{PropDec} we have $ \lambda = l + \lambda'$, where $l$ is the maximal Radon measure majorized by $\lambda$
and $ \lambda'$ is a proper singleton-finite deficient topological measure. Then 
$ \nu = \mu + \lambda = m + \mu' + l + \lambda' = (m + l)  + ( \mu' + \lambda'),$
where
$ \mu' + \lambda'$ is a proper singleton-finite deficient topological measure by Theorem \ref{sumPropDTM}, and 
$m+l$ is a Radon measure majorized by $\nu$. Since $n$ is the maximal Radon measure majorized by $\nu$, we have
$ m+l \le n$, so $ m \le n$.

If $\nu$ is finite then so is $ \mu$, and by Theorem \ref{adnlDTM} $\lambda$ is also finite. 
Then in decomposition  $\nu =(m + l)  + ( \mu' + \lambda')$ as above, $m+l$ is a finite Radon measure, and 
$( \mu' + \lambda')$ is a finite  proper deficient topological measure. 
By Corollary \ref{PropDecCor} $ \nu = n + \nu' $ is the unique decomposition
where $n$ is a finite Radon measure and $\nu'$ is a finite proper deficient topological measure.
Then we must  have $ m+l = n, \  \mu' + \lambda' = \nu'$ and so  $m \le n,  \mu' \le \nu'$.
\end{proof}

\begin{corollary}  \label{CorPrp2a}
Suppose $\mu$ is a  topological measure, $l$ is a finite Radon measure,  and $ \mu \le l$. Then $ \mu$ is a finite Radon measure.
\end{corollary}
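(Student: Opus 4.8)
The plan is to treat this as an immediate consequence of the decomposition results, and in particular of Theorem~\ref{pravnerM}, for which this statement is evidently set up to be a corollary. First I would check the finiteness hypotheses. Since $X \in \mathscr{C}(X)$ and $\mu \le l$, we have $\mu(X) \le l(X) < \infty$, so $\mu$ is a \emph{finite} topological measure; by monotonicity (Remark~\ref{DTMsvva}) it is then both compact-finite and singleton-finite. The same holds for $l$, which is a finite Radon measure and hence a finite deficient topological measure (Remark~\ref{Vloz}).

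Next I would record the Radon/proper decompositions from Theorem~\ref{PropDec}. Writing $\mu = m + \mu'$ with $m$ the maximal Radon measure majorized by $\mu$ and $\mu'$ a finite proper deficient topological measure, the decomposition of $l$ is trivial: being itself a Radon measure, $l$ is the maximal Radon measure majorized by $l$ (by Corollary~\ref{maxRad}), so its proper part vanishes. In the notation of Theorem~\ref{pravnerM} applied with $\nu = l$, this means $n = l$ and $\nu' = 0$.

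Now I would invoke Theorem~\ref{pravnerM}, taking the topological measure to be $\mu$ and the deficient topological measure to be $\nu = l$; all finiteness requirements hold by the first step, and $\nu = l$ is finite. The theorem then gives $m \le n$ and $\mu' \le \nu' = 0$. Since every proper deficient topological measure is nonnegative, $\mu' = 0$, whence $\mu = m$ is a finite Radon measure, as claimed.

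The only real obstacle is the observation that the proper part of $l$ is zero, so that the inequality $\mu' \le \nu'$ furnished by Theorem~\ref{pravnerM} collapses the proper part of $\mu$; everything else is routine bookkeeping with finiteness and nonnegativity. As a backup avoiding Theorem~\ref{pravnerM}, one could argue directly from uniqueness of decomposition (Corollary~\ref{PropDecCor}): use Theorem~\ref{adnlDTM} to write $l = \mu + \lambda$ with $\lambda$ a finite deficient topological measure, decompose $\mu = m + \mu'$ and $\lambda = l_1 + \lambda'$, and compare $l = (m + l_1) + (\mu' + \lambda')$ with the trivial decomposition $l = l + 0$; uniqueness together with $\mu', \lambda' \ge 0$ again forces $\mu' = 0$.
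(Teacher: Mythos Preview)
Your proof is correct and follows essentially the same approach as the paper's own proof: decompose $\mu = m + \mu'$ and $l = l + 0$ via Theorem~\ref{PropDec}, then apply Theorem~\ref{pravnerM} (using that $l$ is finite) to obtain $\mu' \le 0$, whence $\mu' = 0$ and $\mu = m$. Your added checks on finiteness and the backup via Corollary~\ref{PropDecCor} are sound but not needed beyond what the paper does.
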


\begin{proof}
By Theorem \ref{PropDec} write decomposition $ \mu = m + \mu', \ \ \  l= l + 0$. 
By Theorem \ref{pravnerM} $ \mu' \le 0$, i.e. $ \mu' = 0$. Then $\mu = m$ is a Radon measure, and $m$ is finite 
since $\mu$ is. 
\end{proof} 

\begin{definition} \label{extrDTM}
A deficient topological measure is called extreme if it is an extreme point of the sets of all deficient topological 
measures.
\end{definition}

\begin{lemma} \label{finex}
A finite extreme deficient topological measure is either a finite proper deficient topological measure or 
a finite Radon measure. 
\end{lemma}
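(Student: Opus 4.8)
The plan is to feed the canonical decomposition of Corollary \ref{PropDecCor} into the extremality hypothesis. Since $\nu$ is a finite deficient topological measure, write $\nu = m + \nu'$, where $m$ is a finite Radon measure and $\nu'$ is a finite proper deficient topological measure; both summands are deficient topological measures, as $M(X) \subseteq DTM(X)$ by Remark \ref{Vloz}. Put $a = m(X)$ and $b = \nu'(X)$, so $a,b \ge 0$ and $a+b = \nu(X) =: r$. Since $m$ and $\nu'$ are nonnegative and monotone, $m = 0 \Leftrightarrow a = 0$ and $\nu' = 0 \Leftrightarrow b = 0$. The goal is to show $a = 0$ or $b = 0$: in the first case $\nu = \nu'$ is a finite proper deficient topological measure, and in the second $\nu = m$ is a finite Radon measure. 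The case $r = 0$ gives $\nu = 0$, which is both, so we may assume $r > 0$.

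Assume toward a contradiction that $a > 0$ and $b > 0$. The key device is to rescale each summand to the common total mass $r$ so that $\nu$ appears as a genuine convex combination rather than a mere sum. Set $\nu_1 = \tfrac{r}{a}\,m$ and $\nu_2 = \tfrac{r}{b}\,\nu'$. Because Radon measures form a positive cone, $\nu_1$ is again a finite Radon measure, and by Remark \ref{propDTMcone} $\nu_2$ is again a finite proper deficient topological measure; in particular both are deficient topological measures with $\nu_1(X) = \nu_2(X) = r = \nu(X)$. Taking $t = a/r \in (0,1)$, so that $1-t = b/r$, we have
$$ t\,\nu_1 + (1-t)\,\nu_2 = \tfrac{a}{r}\cdot\tfrac{r}{a}\,m + \tfrac{b}{r}\cdot\tfrac{r}{b}\,\nu' = m + \nu' = \nu, $$
exhibiting $\nu$ as a convex combination of the two deficient topological measures $\nu_1$ and $\nu_2$.

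Since $\nu$ is extreme, this forces $\nu_1 = \nu_2$, i.e.\ $\tfrac{r}{a}\,m = \tfrac{r}{b}\,\nu'$, whence $\nu' = \tfrac{b}{a}\,m$. Thus $\nu'$ is a positive multiple of the Radon measure $m$, so $\nu'$ is itself a Radon measure; being also proper, Remark \ref{properDtm} gives $\nu' = 0$, contradicting $b > 0$. Hence $a = 0$ or $b = 0$, which is the asserted dichotomy. The step demanding the most care is exactly this passage from abstract extremality to a usable splitting: one must normalize $m$ and $\nu'$ to the common mass $\nu(X)$ so that they serve as the endpoints of a legitimate convex combination equal to $\nu$, and one must verify that the rescaled pieces remain, respectively, a Radon measure and a proper deficient topological measure. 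Once the proportionality $\nu' = \tfrac{b}{a}\,m$ is extracted, the incompatibility of "Radon" with "proper" closes the argument at once.
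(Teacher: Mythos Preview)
Your proof is correct and follows essentially the same route as the paper: decompose $\nu = m + \nu'$ via the Radon/proper splitting, rescale the two summands to total mass $\nu(X)$ so that $\nu$ becomes a genuine convex combination, and invoke extremality. Your closing step (from $\nu_1 = \nu_2$ deduce that $\nu'$ is a positive multiple of a Radon measure, hence Radon and proper, hence zero) is in fact more explicit than the paper's, which simply asserts that extremality forces $a=0$ or $b=0$ without spelling out why $\nu_1 \neq \nu_2$.
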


\begin{proof}
Let $\nu$ be a (nonzero) finite extreme deficient topological measure. By Theorem \ref{PropDec} we write
$\nu = m  + \nu'$,
where $m $ is a finite Radon measure and $\nu'$ is a finite proper deficient topological measure. We may assume that 
$m(X), \nu'(X) \neq 0$, otherwise  the statement follows. Then $0 < m(X), \nu'(X) \le \nu(X) < \infty$ and, 
adapting an argument from~\cite[Corollary 1]{Svistula:Proper}, we may 
write $ \nu = a \nu_1 + b \nu_2,$ where 
$a = \frac{m(X)}{\nu(X)}, \ b=  \frac{\nu'(X)}{\nu(X)}, \  \nu_1  = \frac{\nu(X)}{m(X)} m, \  \nu_2  = \frac{\nu(X)}{\nu'(X)} \nu'.$
But $ a + b =1$ and $\nu$ is extreme, so either $a=0$ and $\nu = \nu'$,  or $b=0$ and $\nu = m$,
and the statement follows. 
\end{proof}

\begin{lemma}
Let $X$ be locally compact. If $\mu$ is a simple deficient topological measure on $X$ and there exists $x$ such that
$\mu(\{x\}) = 1$ then $\mu$ is a point mass at $x$.
\end{lemma}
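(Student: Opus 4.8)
The plan is to reduce the whole problem to compact sets and then appeal to the uniqueness principle of Remark \ref{byCompacts}. First I would determine $\mu$ on an arbitrary $K \in \mathscr{K}(X)$. If $x \in K$, then $\{x\} \subseteq K$, so monotonicity (Remark \ref{DTMsvva}) gives $\mu(K) \ge \mu(\{x\}) = 1$, and since $\mu$ is simple (Definition \ref{MDe2}) we conclude $\mu(K) = 1$. If instead $x \notin K$, then $\{x\}$ and $K$ are disjoint compact sets, so finite additivity on compact sets \ref{DTM1} yields $\mu(\{x\} \sqcup K) = \mu(\{x\}) + \mu(K) = 1 + \mu(K)$; but $\{x\} \sqcup K$ is compact and $\mu$ assumes only the values $0$ and $1$, so $\mu(\{x\} \sqcup K) \le 1$, forcing $1 + \mu(K) \le 1$, i.e.\ $\mu(K) = 0$ (recall $\mu \ge 0$). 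Hence, for every $K \in \mathscr{K}(X)$, one has $\mu(K) = 1$ if $x \in K$ and $\mu(K) = 0$ if $x \notin K$.

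Next I would observe that these are precisely the values taken on $\mathscr{K}(X)$ by the point mass $\delta_x$, where $\delta_x(A) = 1$ when $x \in A$ and $\delta_x(A) = 0$ otherwise. Since $\delta_x$ is a finite Radon measure, it is a deficient topological measure by the inclusions $M(X) \subseteq TM(X) \subseteq DTM(X)$ of Remark \ref{Vloz}. As $\mu$ and $\delta_x$ are both deficient topological measures that agree on $\mathscr{K}(X)$, Remark \ref{byCompacts} gives $\mu = \delta_x$, which is exactly the claim.

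There is essentially no serious obstacle here; the only place demanding a little care is the case $x \notin K$, where one must combine \ref{DTM1} with the simplicity bound $\mu \le 1$ to squeeze out $\mu(K) = 0$. If one prefers not to invoke $\delta_x$ as an already-known deficient topological measure, the same conclusion can be reached by hand: inner compact regularity \ref{DTM2} propagates the compact values to open sets (so $\mu(U) = 1$ iff $x \in U$), and outer regularity \ref{DTM3} then settles the closed sets, the sole topological input being the regularity of the locally compact (Hausdorff) space $X$, used to separate $x$ from a disjoint closed set $F$ by an open $V \supseteq F$ with $x \notin V$. Either route delivers $\mu = \delta_x$.
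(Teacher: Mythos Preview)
Your proof is correct and is actually more direct and self-contained than the paper's argument. The paper proceeds differently: it first uses superadditivity to get $\mu(X)\ge\mu(\{x\})+\mu(X\setminus\{x\})$, forcing $\mu(X\setminus\{x\})=0$; from this (and monotonicity) one sees the same values on $\mathscr{K}(X)$ that you obtain, and the paper then observes that $\mu$ is finitely subadditive on compact sets and invokes a result from \cite[Section 4]{Butler:DTMLC} to conclude that $\mu$ is a regular Borel measure, hence the point mass at $x$.

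Your route avoids the external citation entirely: you pin down $\mu$ on $\mathscr{K}(X)$ via \ref{DTM1} and the simplicity bound, and then finish with the uniqueness principle of Remark~\ref{byCompacts} against the known deficient topological measure $\delta_x$. What the paper's approach buys is the explicit intermediate statement that $\mu$ is a \emph{measure}, obtained via the general subadditivity criterion; what your approach buys is a shorter, fully internal argument that does not need that machinery. Both are fine; yours is arguably the cleaner proof of the lemma as stated.
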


\begin{proof}
Let $x \in X$ be such that  $\mu(\{x\}) = 1$. Then by superadditivity of $\mu$ 
we have $\mu(X) \ge \mu(\{x\}) + \mu(X \setminus \{x\})$,
and it follows that $\mu(X \setminus \{x\}) =0$.  
It is easy to see that $\mu$ is  finitely subadditive on compact sets, so by~\cite[Section 4]{Butler:DTMLC} $\mu$ is a 
regular Borel measure on $X$, thus, it is a point mass at $x$.
\end{proof}

\begin{lemma} \label{fmv}
Let $X$ be locally compact.
The following are equivalent for a deficient topological measure $\nu$ that assumes finitely many values:
\begin{enumerate}[label=(\alph{enumi})]
\item \label{lp1}
$\nu$ is proper
\item \label{lp2}
There are open sets $U_1, \ldots, U_n$ such that
$X = \bigcup_{i=1}^n U_i$ and $ \nu(U_i)=0, \ i=1, \ldots, n$.
\item \label{lp3}
$\nu(\{x\}) = 0$ for every $x \in X$.
\end{enumerate}
\end{lemma}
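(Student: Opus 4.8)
The plan is to prove the cycle \ref{lp1} $\Rightarrow$ \ref{lp3} $\Rightarrow$ \ref{lp2} $\Rightarrow$ \ref{lp1}; note that \ref{lp2} $\Rightarrow$ \ref{lp3} is in any case immediate from monotonicity, since each $\{x\}$ lies in some $U_i$ with $\nu(U_i)=0$. For \ref{lp1} $\Rightarrow$ \ref{lp3} I argue by contraposition using point masses. Suppose $\nu(\{x\})>0$ for some $x$, and set $c=\min\{\nu(\{x\}),1\}$, so $0<c<\infty$; let $\delta_x$ denote the point mass at $x$, a nonzero Radon measure. For compact $K$ with $x\in K$, monotonicity gives $c\,\delta_x(K)=c\le \nu(\{x\})\le \nu(K)$, while $c\,\delta_x(K)=0\le\nu(K)$ when $x\notin K$; hence $c\,\delta_x\le\nu$ on $\mathscr{K}(X)$, and so $c\,\delta_x\le\nu=|\nu|$ everywhere by Remark \ref{byCompacts}. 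This contradicts properness, so $\nu(\{x\})=0$ for every $x$.

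For \ref{lp2} $\Rightarrow$ \ref{lp1}, suppose $X=\bigcup_{i=1}^n U_i$ with $\nu(U_i)=0$, and let $l$ be a Radon measure with $l\le|\nu|=\nu$. Comparing on compacts and using inner regularity of $l$, together with \ref{DTM2}, we get $l(U_i)=\sup\{l(K):K\subseteq U_i,\ K\in\mathscr{K}(X)\}\le \nu(U_i)=0$. Since $l$ is a measure it is finitely subadditive, so $l(X)\le\sum_{i=1}^n l(U_i)=0$, whence $l=0$. Thus $\nu$ is proper.

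The substantive implication is \ref{lp3} $\Rightarrow$ \ref{lp2}. Let $0=a_0<a_1<\cdots<a_k$ be the finitely many values of $\nu$. Since $\nu(\{x\})=0$, outer regularity and finiteness of the value set give, for each $x$, an open $U_x\ni x$ with $\nu(U_x)<a_1$, hence $\nu(U_x)=0$. Because the value set is finite, $\beta=\sup\{\nu(W): W \text{ is a finite union of measure-zero open sets}\}$ is attained, say at $W_0=U_1\cup\cdots\cup U_n$; moreover every compact set is covered by finitely many of the $U_x$, so by \ref{DTM2} applied to the open set $X$ one gets $\beta=\nu(X)$. The set $F=X\setminus W_0$ is closed, and superadditivity (Remark \ref{DTMsvva}) applied to $W_0\sqcup F\subseteq X$ yields $\nu(X)\ge \nu(W_0)+\nu(F)=\beta+\nu(F)$; when $\beta=\nu(X)$ is finite this forces $\nu(F)=0$. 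Outer regularity together with finiteness of the value set then produces an open $V\supseteq F$ with $\nu(V)=0$, and $X=U_1\cup\cdots\cup U_n\cup V$ is the required cover.

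I expect the main obstacle to be the case $\nu(X)=\infty$ (possible only if $\infty=a_k$ is a value), where the superadditivity estimate degenerates to $\infty\ge\infty+\nu(F)$ and no longer pins down $\nu(F)$; note that the difficulty is genuine because a deficient topological measure need not be subadditive, so a finite union of measure-zero open sets could a priori have infinite measure. The plan is to rule this out first, by showing that a proper deficient topological measure with finitely many values and all singletons of measure zero must be finite, after which the argument above applies verbatim; once finiteness is in hand one may instead simply quote Theorem \ref{epsProper}\ref{finPdtmX} with any $\epsilon<a_1$, since then each covering set has measure $<a_1$ and hence measure $0$. Establishing this finiteness, rather than the regularity bookkeeping, is where the real work lies.
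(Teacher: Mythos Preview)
Your direct arguments for \ref{lp1}$\Rightarrow$\ref{lp3} (via point masses) and \ref{lp2}$\Rightarrow$\ref{lp1} (via subadditivity of a dominated Radon measure) are correct and more elementary than the paper's route, which quotes Theorem~\ref{epsProper} for \ref{lp1}$\Leftrightarrow$\ref{lp2} and then notes \ref{lp2}$\Rightarrow$\ref{lp3}. Your \ref{lp3}$\Rightarrow$\ref{lp2} argument in the finite case is the paper's idea in a different wrapper: the paper starts from any finite open cover (e.g.\ $\{X\}$), and for each $U$ with $\nu(U)=a>0$ picks compact $K\subseteq U$ with $\nu(K)=a$, deduces $\nu(U\setminus K)=0$ from superadditivity, and replaces $U$ by $U\setminus K$ together with finitely many measure-zero neighborhoods covering $K$. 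Your attained supremum $\beta=\nu(W_0)=\nu(X)$ and the closed complement $F$ play exactly the roles of $\nu(K)=\nu(U)$ and $U\setminus K$.

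The genuine gap is your proposed handling of $\nu(X)=\infty$. You plan to show that a \emph{proper} finitely-valued deficient topological measure satisfying \ref{lp3} is finite; but in the implication \ref{lp3}$\Rightarrow$\ref{lp2} you only have \ref{lp3} as hypothesis, not \ref{lp1}. If you invoke properness there, what you are really proving is \ref{lp1}$\Rightarrow$\ref{lp2}, and then your cycle \ref{lp1}$\Rightarrow$\ref{lp3}$\Rightarrow$\ref{lp2}$\Rightarrow$\ref{lp1} no longer closes: nothing remains to carry you from \ref{lp3} back to \ref{lp1} or \ref{lp2}. Quoting Theorem~\ref{epsProper}\ref{finPdtmX} has the same defect, since that part presupposes both properness and finiteness. (For what it is worth, the paper's own proof does not explicitly treat this case either: its appeal to Theorem~\ref{epsProper} for \ref{lp1}$\Leftrightarrow$\ref{lp2} uses part~\ref{finPdtmX}, which assumes $\nu$ finite, and its superadditivity step $\nu(U)\ge\nu(U\setminus K)+\nu(K)$ is vacuous when $\nu(U)=\nu(K)=\infty$.) To make your cycle work as stated you would need to deduce finiteness of $\nu$ from \ref{lp3} and the finite-range hypothesis alone, without assuming \ref{lp1}.
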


\begin{proof}
By Theorem \ref{epsProper},  \ref{lp1} and \ref{lp2} are equivalent. Clearly,  \ref{lp2} implies \ref{lp3}. 
Now assume \ref{lp3}, and we shall show that it implies \ref{lp2}.  Let $\mathcal{U}$  be a finite open  cover of $X$, 
and suppose $U \in \mathcal{U}$ is such that $\nu(U)  = a > 0$. There exists $ K \in \mathscr{K}(X)$ such that $ K \subseteq U$ and 
$\nu(K)  = a$. Since $U = (U \setminus K ) \sqcup K$ and by superadditivity $\nu(U) \ge \nu(U \setminus K) + \nu(K)$, 
it follows that $\nu(U \setminus K) = 0$.  For each $ x \in K$ there is an open set $V_x$ such that $\nu(V_x) = 0$.
Finitely many of $V_x$ cover $K$, and we call them $V_1, \ldots, V_n$. So 
$ U \subseteq (U \setminus K) \cup V_1 \cup \ldots V_n$, 
and $\nu(U \setminus K) = \nu(V_1) = \ldots =\nu(V_n) = 0$.
Replacing the set $U$ in the cover $\mathcal{U}$  by open sets $U \setminus K, V_1, \ldots, V_n$ and repeating the same 
procedure for each set $U  \in \mathcal{U}$ for which $\nu(U) >0$, we obtain a new finite open cover $\mathcal{W}$ with the property
that $\nu(W) =0$ for each set $ W  \in \mathcal{W}$. Thus, \ref{lp2} is satisfied. 
\end{proof}

\section{Decompositions of signed deficient topological measures}

\begin{theorem} \label{adnlSTM} 
Suppose  $ \nu$ and $ \mu$ are signed topological measures at least one of which has a finite norm. 
Then $  \lambda =\nu - \mu$ is a signed topological measure. If $ \mu \le \nu$ then $ \lambda = \nu - \mu$ is a topological  measure.
\end{theorem}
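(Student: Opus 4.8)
The plan is to treat the two assertions separately, with essentially all of the work going into the first. Since $-\mu$ is again a signed topological measure and $\nu - \mu = -(\mu - \nu)$, I may assume throughout that it is the subtracted measure $\mu$ that has finite norm. The decisive observation is that $\|\mu\| < \infty$ forces $\mu$ to be real-valued and bounded on \emph{all} of $\mathscr{O}(X) \cup \mathscr{C}(X)$, not merely on $\mathscr{K}(X)$: if $|\mu(K)| \le \|\mu\|$ for every compact $K$, then by \ref{STM2} each open value $\mu(U)$ is the limit of the net $(\mu(K))_{K \subseteq U}$ of numbers bounded by $\|\mu\|$, whence $|\mu(U)| \le \|\mu\|$; and then by \ref{STM3} the same bound passes to closed sets. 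Thus $\mu(A) \in \mathbb{R}$ with $|\mu(A)| \le \|\mu\|$ for every $A \in \mathscr{O}(X) \cup \mathscr{C}(X)$.

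Granting this, I would simply define $\lambda(A) = \nu(A) - \mu(A)$ for every $A \in \mathscr{O}(X) \cup \mathscr{C}(X)$. This is unambiguous: $\mu(A)$ is finite and $\nu$ assumes at most one of $\pm\infty$, so no expression $\infty - \infty$ ever arises; for the same reason $\lambda$ itself assumes at most one of $\pm\infty$, and $\lambda(\emptyset) = 0$. Verifying the three axioms is then routine, precisely because subtracting the finite quantity $\mu$ commutes with the limiting operations. For \ref{STM1}, if $A, B, A \sqcup B \in \mathscr{K}(X) \cup \mathscr{O}(X)$ then $\lambda(A \sqcup B) = (\nu(A) + \nu(B)) - (\mu(A) + \mu(B))$, and since $\mu(A), \mu(B)$ are finite I may regroup this as $\lambda(A) + \lambda(B)$. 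For \ref{STM2}, along the net $K \subseteq U$ one has $\nu(K) \to \nu(U)$ and $\mu(K) \to \mu(U)$ with the latter limit finite, so $\lambda(K) = \nu(K) - \mu(K) \to \nu(U) - \mu(U) = \lambda(U)$, valid even when $\nu(U) = \pm\infty$ because only a bounded quantity is subtracted; \ref{STM3} is identical, with closed sets approximated from outside. Hence $\lambda$ is a signed topological measure, which settles the first assertion.

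For the second assertion, suppose in addition $\mu \le \nu$. Then $\lambda(A) = \nu(A) - \mu(A) \ge 0$ for every $A$, so $\lambda$ takes values in $[0,\infty]$, and it remains only to upgrade the signed topological measure $\lambda$ to a topological measure, i.e. to see that the limits in \ref{STM2} and \ref{STM3} are in fact the supremum and infimum demanded by \ref{TM2} and \ref{TM3}; the additivity \ref{STM1}, for a nonnegative set function, is already exactly \ref{TM1}. I expect this to be the only genuinely delicate point, and it is the main obstacle: for a nonmonotone net a limit need not equal its supremum, so the upgrade rests on the monotonicity of nonnegative signed deficient topological measures, equivalently on the fact that a nonnegative signed (deficient) topological measure is an ordinary (deficient) topological measure, which I would invoke from the foundational development in~\cite{Butler:STMLC}. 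With that in hand, $\lambda$ is a nonnegative deficient topological measure also satisfying \ref{TM1}, and a deficient topological measure obeying \ref{TM1} is a topological measure, completing the argument. Alternatively, I could route the upgrade through the positive variation: by Remark~\ref{varn} one has $\lambda \le \lambda^{+}$ on $\mathscr{K}(X)$ with $\lambda^{+}$ a deficient topological measure, and the content of the monotonicity step is exactly the reverse inequality $\lambda^{+} \le \lambda$ on $\mathscr{K}(X)$, after which Remark~\ref{byCompacts} yields $\lambda = \lambda^{+}$ everywhere.
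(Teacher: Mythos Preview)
Your first part is correct and spells out in detail what the paper dismisses as ``easy to check'': that $\|\mu\| < \infty$ forces $\mu$ to be real-valued and bounded on all of $\mathscr{O}(X) \cup \mathscr{C}(X)$, after which the three axioms \ref{STM1}--\ref{STM3} transfer to $\lambda = \nu - \mu$ by the routine limit arguments you give. This is the same approach, just more explicit.

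For the second part you correctly isolate monotonicity of $\lambda$ as the only real issue, but you then outsource it to~\cite{Butler:STMLC} (or, in the alternative route, leave the reverse inequality $\lambda^{+} \le \lambda$ on $\mathscr{K}(X)$ unproved). The paper instead establishes monotonicity directly in a few lines, using only the additivity \ref{STM1} and nonnegativity already in hand: for $K \subseteq U$ with $K$ compact and $U$ open, $\lambda(U) = \lambda(K) + \lambda(U \setminus K) \ge \lambda(K)$; for $K \subseteq C$ both compact with $\lambda(C) < \infty$, pick open $U \supseteq C$ with $|\lambda(U) - \lambda(C)| < \epsilon$ and get $\lambda(C) \ge \lambda(U) - \epsilon = \lambda(K) + \lambda(U \setminus K) - \epsilon \ge \lambda(K) - \epsilon$; the remaining inclusion types are handled similarly. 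Once $\lambda \ge 0$ is monotone, the net limits in \ref{STM2}--\ref{STM3} are automatically the supremum and infimum of \ref{TM2}--\ref{TM3}, and $\lambda$ is a topological measure. This self-contained argument is short enough that it is preferable to the citation, and your $\lambda^{+}$ alternative would in any case need exactly this monotonicity step to close.
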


\begin{proof}
Since a signed topological measure assumes at most one of $ \infty, -\infty$, and at least one of 
$ \| \mu \|,   \| \nu \|$ is finite, it is easy to check that $ \lambda = \nu - \mu$ is a signed topological measure.
If $ \mu \le \nu$ then $ \lambda = \nu - \mu \ge 0$. Note that $ \lambda $ is monotone on $ \mathscr{O}(X) \cup \mathscr{C}(X)$: 
for example, if $ K \subseteq U, K \in \mathscr{K}(X), U \in \mathscr{O}(X)$ then $\lambda(U) = \lambda(K)  + \lambda(U \setminus K) \ge \lambda(K)$. 
If $K \subseteq C, \, K,C \in \mathscr{K}(X)$ then clearly $\lambda(K) \le \lambda(C)$ if $ \lambda(C) = \infty$; and if $\lambda(C) < \infty$, for $ \epsilon>0$ choose
$ U$ such that $U \in \mathscr{O}(X), C \subseteq U, | \lambda(U) - \lambda(C) | < \epsilon$, and then 
$\lambda(C) \ge \lambda(U) - \epsilon = \lambda(K) + \lambda( U \setminus K) - \epsilon \ge \lambda(K) - \epsilon$, so $ \lambda(C) \ge \lambda(K)$. 
Other possible cases are considered similarly. A signed topological measure $ \lambda \ge 0$ and monotone, so 
$\lambda$ is a topological measure.
\end{proof}

\begin{remark} \label{spdtmsum}
If a signed deficient topological measure $\mu$ is proper, then so is $- \mu$. 
If $\mu, \nu$ are proper signed deficient topological measures such that $\mu + \nu$ (or $ \mu - \nu$) is defined, 
then by Remark \ref{varn}
$| \mu \pm \nu| \le |\mu|  + |\nu|$, and $|\mu|  + |\nu|$ is proper by Theorem \ref{sumPropDTM}.  
Then $ \mu + \nu$ (or $ \mu - \nu$) is also a proper signed deficient topological measure.
In particular, if $\mu$ and $\nu$ are proper signed deficient topological measures such that 
at least one of $\| \mu \|, \, \| \nu \| < \infty$
then $ \mu \pm \nu$ is a proper signed deficient topological measure. 
The family of all proper signed topological measures with finite norms is a linear space. 
\end{remark}

\begin{theorem} \label{SDTMprdecG}
Suppose a signed singleton-finite deficient topological measure $ \mu$ is the difference of two deficient topological measures, 
one singleton-finite, and  the other  finite. 
\begin{enumerate}[label=(\roman*),ref=(\roman*)]
\item \label{dep1} 
There is a decomposition $\mu = m + \mu'$, where
$m$ is a signed Radon measure, and $ \mu'$ is a proper singleton-finite signed deficient topological measure.
\item \label{dep2}
If  $ \mu$ is the difference of two finite deficient topological measures then there is a unique decomposition 
$\mu = m + \mu'$, where
$m$ is a signed Radon measure, $ \mu'$ is a proper signed deficient topological measure, 
and  $ \| m \|, \, \| \mu' \| < \infty$.
\item \label{dep3}
If $\mu$ is a signed topological measure with $ \| \mu \| < \infty$ then there is a unique decomposition 
$\mu = m + \mu'$, where
$m$ is a signed Radon measure, $ \mu'$ is a proper signed topological measure, 
and  $ \| m \|, \, \| \mu' \| < \infty$.
\end{enumerate}
\end{theorem}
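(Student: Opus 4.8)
The plan is to reduce all three parts to Theorem \ref{PropDec}, applied separately to the two deficient topological measures whose difference is $\mu$, and then to control the properness of the resulting differences by means of Remark \ref{spdtmsum}. So I would begin by writing $\mu = \nu_1 - \nu_2$, where (say) $\nu_1$ is a singleton-finite deficient topological measure and $\nu_2$ is a finite deficient topological measure; the roles may be interchanged without affecting the argument. By Theorem \ref{PropDec} I then have decompositions $\nu_1 = m_1 + \nu_1'$ and $\nu_2 = m_2 + \nu_2'$, where $m_i$ is the maximal Radon measure majorized by $\nu_i$ and $\nu_i'$ is a proper singleton-finite deficient topological measure. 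Since $\nu_2$ is finite, the same theorem guarantees that both $m_2$ and $\nu_2'$ are finite, which is exactly the finiteness I need to make all subsequent subtractions legitimate.

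For part \ref{dep1} I would set $m = m_1 - m_2$ and $\mu' = \nu_1' - \nu_2'$. Here $m$ is a signed Radon measure because it is a difference of Radon measures of which $m_2$ is finite. Because $\|\nu_2'\| < \infty$, Remark \ref{spdtmsum} applies and shows that $\mu' = \nu_1' - \nu_2'$ is a proper signed deficient topological measure; it is singleton-finite since $\nu_1'$ is singleton-finite and $\nu_2'$ is finite. To confirm the identity $\mu = m + \mu'$, I would check it on compact sets only: for compact $K$ the values $m_1(K), m_2(K), \nu_2'(K)$ are all finite, so $\mu(K) = (m_1(K) - m_2(K)) + (\nu_1'(K) - \nu_2'(K)) = m(K) + \mu'(K)$, and by Remark \ref{byCompacts} agreement on $\mathscr{K}(X)$ forces agreement on all of $\mathscr{O}(X) \cup \mathscr{C}(X)$.

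For part \ref{dep2} both $\nu_1, \nu_2$ are finite, so Theorem \ref{PropDec} makes all four of $m_1, m_2, \nu_1', \nu_2'$ finite; then $\|m\|, \|\mu'\| < \infty$, which is the only addition to part \ref{dep1}. The uniqueness is where I would spend care: if $\mu = m + \mu' = l + \lambda'$ with $m, l$ signed Radon measures and $\mu', \lambda'$ proper signed deficient topological measures, all of finite norm, then $m - l = \lambda' - \mu'$. The right-hand side is a proper signed deficient topological measure by Remark \ref{spdtmsum}, while the left-hand side is a signed Radon measure; since these two set functions coincide, $m-l$ is a proper signed Radon measure, and by Remark \ref{properDtm} such a measure is $0$. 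Hence $m = l$ and $\mu' = \lambda'$.

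For part \ref{dep3} I would first invoke the Jordan-type decomposition $\mu = \mu^+ - \mu^-$ of a finite-norm signed topological measure into two finite topological measures (from \cite{Butler:STMLC}), and then run the construction above with $\nu_1 = \mu^+$, $\nu_2 = \mu^-$. The extra input now needed is the finite-topological-measure refinement in Theorem \ref{PropDec}: since $\mu^{\pm}$ are finite topological measures, $\nu_1'$ and $\nu_2'$ are finite \emph{proper topological} measures and $m_1, m_2$ are finite Radon measures. Consequently $\mu' = \nu_1' - \nu_2'$ is a difference of two finite topological measures, hence a signed topological measure by Theorem \ref{adnlSTM}, and it is proper by Remark \ref{spdtmsum}; uniqueness follows verbatim from part \ref{dep2}. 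I expect this last part to be the main obstacle, precisely because one must upgrade the proper remainder from a deficient to a genuine topological measure, which is where the topological-measure versions of Theorem \ref{PropDec} and the difference theorem \ref{adnlSTM} are indispensable. The secondary point to watch throughout is the extended-real arithmetic: it is exactly the finiteness of one of $\nu_1, \nu_2$ (equivalently of $m_2, \nu_2'$) that prevents the rearrangement $\mu = (m_1 - m_2) + (\nu_1' - \nu_2')$ from ever creating an $\infty - \infty$.
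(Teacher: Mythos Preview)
Your proofs of parts \ref{dep1} and \ref{dep2} are correct and essentially identical to the paper's: decompose each summand via Theorem \ref{PropDec}, regroup, and use Remark \ref{spdtmsum} for properness and Remark \ref{properDtm} for uniqueness.

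In part \ref{dep3}, however, there is a genuine gap. You assert that the Jordan-type decomposition from \cite{Butler:STMLC} writes a finite-norm signed topological measure as $\mu = \mu^{+} - \mu^{-}$ with $\mu^{+}, \mu^{-}$ finite \emph{topological} measures. That is not what is available: the positive and negative variations of a signed topological measure are in general only finite \emph{deficient} topological measures (see Remark \ref{varn} and the paper's own invocation of \cite[Sections 2 and 4]{Butler:STMLC}). Consequently you cannot appeal to the topological-measure clause of Theorem \ref{PropDec} to get $\nu_1', \nu_2'$ as proper topological measures, and your application of Theorem \ref{adnlSTM} to conclude that $\mu' = \nu_1' - \nu_2'$ is a signed topological measure collapses.

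The repair is much simpler than the route you attempted, and it is exactly what the paper does: once part \ref{dep2} gives $\mu = m + \mu'$ with $\| m \|, \| \mu' \| < \infty$, observe directly that $\mu' = \mu - m$. Here $\mu$ is a signed topological measure with finite norm and $m$ is a signed Radon measure with finite norm (hence a signed topological measure), so by Theorem \ref{adnlSTM} their difference $\mu'$ is a signed topological measure. Properness and uniqueness then carry over from part \ref{dep2} unchanged. This bypasses any need for $\mu^{\pm}$ to be topological measures.
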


\begin{proof}
\begin{enumerate}[label=(\roman*),ref=(\roman*)]
\item
Assume $ \mu  = \nu - \lambda$, where $ \nu$ is singleton-finite and $ \lambda$ is finite (in the other case consider $ -\mu$).
By Theorem \ref{PropDec}
write $ \nu = n + \nu', \ \lambda = l + \lambda'$, where $n$ is a Radon measure, $l$ is a finite Radon measure, $ \nu'$ is a singleton-finite
proper deficient topological measure, and $ \lambda'$ is a finite proper deficient topological measure.
Then $ \mu = n+ \nu' -l - \lambda' = (n-l) + (\nu' - \lambda')$, and setting $ m= n-l, \mu' = \nu'- \lambda'$ gives the desired decomposition.   
\item
Assume $ \mu  = \nu - \lambda$, where $ \nu$ and $ \lambda$ are finite. 
In the decomposition $ \mu = m + \mu'$  as in part \ref{dep1} we see that $\| m \| < \infty$ and $ \| \mu' \| < \infty$. 
To show the uniqueness of the decomposition, 
assume that $\mu = m + \mu' = m_1 + \mu_1'$, where $m, m_1$ are  signed Radon measures, 
$ \mu', \mu_1'$ are proper signed deficient topological measures, and 
$ \| m \|, \, \| m_1 \|,\, \|  \mu' \|, \,  \| \mu_1' \| < \infty$. 
Then $m-m_1 = \mu_1' - \mu'$ is a signed Radon measure which is proper by Remark \ref{spdtmsum}. 
By Remark \ref{properDtm} $m-m_1 = \mu_1' - \mu' =0$, and  the decomposition is unique.
\item
If $ \mu $ is a signed topological measure with $ \| \mu \| < \infty$, by 
results in~\cite[Sections 2 and 4]{Butler:STMLC} we may 
write $ \mu = \mu^+ - \mu^-$, where $ \mu^+ $ and $\mu^-$ are finite deficient topological measures. 
Apply part \ref{dep2} and note that $\mu' = \mu - m$  is a signed topological measure.
\end{enumerate}
\end{proof}

\begin{theorem} \label{pravner}
Suppose $\mu, \nu$ are signed topological measures, $ \mu \le \nu$, and $ \| \mu \| < \infty$. 
Let $ \mu = m + \mu'$, where $m$ is a signed Radon measure , $ \mu'$ is a proper signed deficient topological measure, and
$\| m \|, \| \mu' \| < \infty$, be the unique decomposition as in Theorem \ref{SDTMprdecG}.
Assume  that  $  \nu = n + \nu'$, where  $n$ is a signed  Radon measure and 
$\nu'$ is a proper singleton-finite signed deficient topological measure, is the unique decomposition 
(see, for example, Theorem \ref{SDTMprdecG} or Theorem \ref{PropDec}).
\begin{enumerate}[label=(\roman*),ref=(\roman*)]
\item \label{cr1}
If $ \mu \le \nu$ then $m \le n$ and $ \mu' \le \nu'$.
If $ \nu \le \mu$ then $n \le m$ and $ \nu' \le \mu'$.
\item \label{cr2}
If  $ \| \nu \| < \infty$ and $| \mu (A)| \le \nu (A) $ for any $ A \in \mathscr{K}(X)$ (or any $ A \in  \mathscr{O}(X) )$,  
then $-n \le m \le n, \   -\nu' \le \mu' \le \nu'$.
\end{enumerate}
\end{theorem}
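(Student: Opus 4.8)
The plan is to reduce the signed statement to the unsigned situation, mirroring the unsigned comparison Theorem \ref{pravnerM}, by passing to the difference of the two signed topological measures; Theorem \ref{adnlSTM} turns this difference into an honest (nonnegative) topological measure, which can then be split by Theorem \ref{PropDec} and recombined.

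First I would treat part \ref{cr1} under $\mu \le \nu$. Set $\lambda = \nu - \mu$. Since $\|\mu\| < \infty$, Theorem \ref{adnlSTM} shows $\lambda$ is a signed topological measure, and $\mu \le \nu$ makes it a genuine topological measure; it is singleton-finite because both $\mu$ and $\nu$ are. By Theorem \ref{PropDec} I decompose $\lambda = l + \lambda'$, with $l$ the maximal Radon measure majorized by $\lambda$ and $\lambda'$ a proper singleton-finite deficient topological measure. Adding back $\mu = m + \mu'$ gives
\[
\nu = \mu + \lambda = (m + l) + (\mu' + \lambda').
\]
Here $m + l$ is again a signed Radon measure: $m$ has finite norm, so writing $m = m^{+} - m^{-}$ with $m^{-}$ finite, one has $m + l = (m^{+} + l) - m^{-}$, a difference of Radon measures whose negative part is finite. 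Moreover $\mu' + \lambda'$ is proper by Remark \ref{spdtmsum} (using $\|\mu'\| < \infty$) and singleton-finite. Thus $(m+l) + (\mu' + \lambda')$ is a decomposition of $\nu$ of exactly the type assumed unique, so $n = m + l$ and $\nu' = \mu' + \lambda'$; since $l \ge 0$ and $\lambda' \ge 0$, I conclude $m \le n$ and $\mu' \le \nu'$.

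The case $\nu \le \mu$ runs the same way with $\lambda = \mu - \nu$, giving $\mu = (n + l) + (\nu' + \lambda')$. Here, however, $\nu$ (hence $n$ and $\nu'$) need not have finite norm, so rather than invoke the finite-norm uniqueness of the decomposition of $\mu$ I would compare the two decompositions of $\mu$ directly. Subtracting yields a set function $\rho$ with $\rho = m - (n+l) = (\nu' + \lambda') - \mu'$ on $\mathscr{K}(X)$. Because $\|m\|, \|\mu'\| < \infty$ and every term is compact-finite, $\rho$ is a genuine signed measure; its total variation satisfies $|\rho| \le |m| + |n| + l$, a Radon measure, so $|\rho|$ is itself Radon, while $\rho$ is proper by Remark \ref{spdtmsum}. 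Then Definition \ref{properSDTM} applied with the Radon measure $|\rho| \le |\rho|$ forces $|\rho| = 0$, i.e. $\rho = 0$, so $m = n + l$ and $\mu' = \nu' + \lambda'$, whence $n \le m$ and $\nu' \le \mu'$. Finally, part \ref{cr2} follows by applying part \ref{cr1} twice: the hypothesis $|\mu(A)| \le \nu(A)$ on $\mathscr{K}(X)$ forces $\nu \ge 0$ and, by Remark \ref{byCompacts}, $-\nu \le \mu \le \nu$ on all of $\mathscr{O}(X) \cup \mathscr{C}(X)$; since $\|\nu\| < \infty$, the measure $-\nu$ has decomposition $(-n) + (-\nu')$ with $-n$ signed Radon and $-\nu'$ proper (Remark \ref{spdtmsum}), so applying part \ref{cr1} to $\mu \le \nu$ and to $-\nu \le \mu$ gives $m \le n$, $\mu' \le \nu'$ and $-n \le m$, $-\nu' \le \mu'$, which combine to $-n \le m \le n$ and $-\nu' \le \mu' \le \nu'$.

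I expect the main obstacle to be precisely the finiteness bookkeeping in the $\nu \le \mu$ branch: guaranteeing that the recombined Radon part remains a genuine signed measure, that none of the sums or differences produce $\infty - \infty$, and that $|\rho|$ is actually Radon, all of which can fail if one reasons only with $\nu$, which is merely singleton-finite. The finite norm of $m$ and $\mu'$ is the lever that makes these differences well-defined on $\mathscr{K}(X)$ and lets Remarks \ref{spdtmsum} and \ref{properDtm} (equivalently Definition \ref{properSDTM}) close the argument.
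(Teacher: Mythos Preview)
Your argument for the case $\mu \le \nu$ in part~\ref{cr1} and for part~\ref{cr2} is correct and matches the paper's proof essentially line for line.

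The $\nu \le \mu$ branch, however, has a genuine gap, and the paper's route is both simpler and avoids it. The paper does \emph{not} pass to a decomposition of $\mu$; instead it again decomposes $\nu$, writing
\[
\nu \;=\; \mu - \lambda \;=\; (m - l) + (\mu' - \lambda'),
\]
and then invokes the \emph{assumed} uniqueness of the decomposition $\nu = n + \nu'$ to conclude $n = m - l$ and $\nu' = \mu' - \lambda'$. This works because the finiteness sits with $m$ and $\mu'$: since $\|m\|,\|\mu'\|<\infty$ and $l,\lambda'\ge 0$, the differences $m-l$ and $\mu'-\lambda'$ are everywhere defined (values in $[-\infty,\infty)$), $m-l = m^{+}-(m^{-}+l)$ is a bona fide signed Radon measure, and $\mu'-\lambda'$ is a proper singleton-finite signed deficient topological measure by Remark~\ref{spdtmsum}.

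Your detour through $\mu = (n+l)+(\nu'+\lambda')$ breaks precisely at the point you yourself flag. Nothing in the hypotheses prevents $\nu(K)=-\infty$ for some compact $K$ (we only know $\nu'$ is singleton-finite and $n$ is compact-finite). In that event $\nu'(K)=-\infty$ while $\lambda(K)=\mu(K)-\nu(K)=+\infty$ forces $\lambda'(K)=+\infty$, so $\nu'(K)+\lambda'(K)$ is undefined and the identity $\rho = (\nu'+\lambda')-\mu'$ on $\mathscr{K}(X)$ fails. Separately, the assertion that $\rho = m-(n+l)$ is a ``genuine signed measure'' is not justified: writing $\rho = (m^{+}+n^{-})-(m^{-}+n^{+}+l)$, if $n^{+}$ is the finite half of $n$ and $l$ is infinite, neither summand is finite and $\rho$ need not be a signed Radon measure, so the step ``$|\rho|$ is Radon, $\rho$ is proper, hence $\rho=0$'' is unsupported. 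The fix is exactly the paper's: keep the uniqueness hypothesis on $\nu$, and arrange all subtractions so that the term with finite norm is the minuend.
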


\begin{proof}
\begin{enumerate}[label=(\roman*),ref=(\roman*)]
\item
Suppose first  $ \mu \le \nu$. By Theorem \ref{adnlSTM} there is a topological measure $ \lambda = \nu - \mu$, 
so $ \nu = \mu + \lambda$.
Note that $ \lambda$ is singleton-finite.
By Theorem \ref{PropDec}
we have $\lambda = l + \lambda'$, where $l$ is a Radon measure and $\lambda'$ is a proper singleton-finite deficient topological measure. 
Then $ \nu= \mu + \lambda = m + \mu' + l  + \lambda'  = ( m+l) + (\mu' + \lambda')$, where $m+ l$ is a Radon signed measure 
and $\mu'+ \lambda'$ is a proper singleton-finite signed deficient topological measure by Theorem \ref{sumPropDTM}.  
Since $  \nu = n + \nu'$ is the unique decomposition, we have
$n =m + l$ and $\nu' = \mu' + \lambda'$. Then $m \le n$ and $ \mu' \le \nu'$.

Now suppose $ \nu \le \mu$. By Theorem \ref{adnlSTM} there is a topological measure $ \lambda = \mu - \nu$, 
so $ \nu = \mu - \lambda$, and $ \lambda$ is singleton-finite. As above, 
$ \nu= \mu - \lambda = m + \mu' - l  - \lambda'  = ( m-l) + (\mu' - \lambda')$, where $m- l$ is a Radon signed measure 
and $\mu' - \lambda'$ is a proper singleton-finite signed deficient topological measure by Remark \ref{spdtmsum}.  
Since $  \nu = n + \nu'$ is the unique decomposition, we see that $n =m - l$ and $\nu' = \mu' - \lambda'$.  
Then $n \le m$ and $ \nu' \le \mu'$.

\item
Let $ \| \nu \| < \infty$.
If $| \mu (A)| \le \nu (A) $ for any $ A \in \mathscr{K}(X)$ (or any $ A \in  \mathscr{O}(X) )$ then 
$ -\nu \le \mu \le \nu$ and by part \ref{cr1} we have $-n \le m \le n, \   -\nu' \le \mu' \le \nu'$.
\end{enumerate}
\end{proof}

\begin{remark}
The proof of Theorem \ref{pravnerM} is similar to the one for Theorem \ref{pravner}. It uses Theorem \ref{adnlDTM} to get
a deficient topological measure $\lambda$  such that $\nu = \mu + \lambda$; and then uses decompositions of
$\mu, \ \nu, $ and $\lambda$ according to Theorem \ref{PropDec}.
\end{remark} 

\begin{corollary} \label{CorPrp1}
Suppose $\mu$ is a signed singleton-finite topological measure, $l$ is a finite Radon measure,
and $| \mu(A) | \le l(A)$ for any $ A \in \mathscr{K}(X)$ (or any $ A \in  \mathscr{O}(X) )$. Then $\mu$ is a signed Radon measure with finite norm.
\end{corollary}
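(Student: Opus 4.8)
The plan is to reduce the corollary to Theorem \ref{pravner}\ref{cr2} by taking $\nu = l$ and exploiting the fact that a Radon measure has trivial proper part in its decomposition. First I would record that $\mu$ has finite norm: since $l$ is a finite Radon measure, $l(A) \le l(X) < \infty$ for every $A \in \mathscr{K}(X)$, so the hypothesis $|\mu(A)| \le l(A)$ gives $\|\mu\| = \sup\{|\mu(K)| : K \in \mathscr{K}(X)\} \le l(X) < \infty$. Hence $\mu$ is a signed topological measure with $\|\mu\| < \infty$, exactly the kind of set function to which Theorem \ref{SDTMprdecG}\ref{dep3} applies.

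Next I would write down the two relevant decompositions. By Theorem \ref{SDTMprdecG}\ref{dep3} there is the unique decomposition $\mu = m + \mu'$, where $m$ is a signed Radon measure, $\mu'$ is a proper signed topological measure, and $\|m\|, \|\mu'\| < \infty$. On the other side, $l$ is itself a finite Radon measure, hence a signed topological measure by Remark \ref{Vloz}, and its unique decomposition is simply $l = l + 0$: the signed Radon part is $l$ and the proper part is $0$, which is proper since the only Radon measure majorized by $|0| = 0$ is $0$ itself (see Remark \ref{properDtm}). I would then invoke Theorem \ref{pravner}\ref{cr2} with the roles $\mu \mapsto \mu$, $\nu \mapsto l$: both are signed topological measures of finite norm and $|\mu(A)| \le l(A)$ on $\mathscr{K}(X)$ (or on $\mathscr{O}(X)$), so its conclusion $-\nu' \le \mu' \le \nu'$ here reads $0 \le \mu' \le 0$. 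Thus $\mu' = 0$ on $\mathscr{K}(X)$, whence $\mu' = 0$ by Remark \ref{byCompacts}, and therefore $\mu = m$ is a signed Radon measure, of finite norm by the first step.

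The computation is essentially immediate once the two decompositions are in place; the only point requiring genuine care—and the step I expect to be the main obstacle—is recognizing that the proper part of the Radon measure $l$ is $0$, which is precisely what collapses the two-sided estimate of Theorem \ref{pravner}\ref{cr2} down to $\mu' = 0$. Everything else is bookkeeping with the cited results, and no fresh regularity or additivity argument is needed.
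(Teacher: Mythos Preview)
Your proposal is correct and follows essentially the same route as the paper's own proof: first observe $\|\mu\| \le l(X) < \infty$, then apply the unique decomposition of Theorem~\ref{SDTMprdecG} to $\mu$ and the trivial decomposition $l = l + 0$, and conclude via Theorem~\ref{pravner}\ref{cr2} that $\mu' = 0$. The paper's argument is identical in structure and in the results invoked.
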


\begin{proof}
From Definition \ref{SDTMnorDe}  we see that $ \| \mu \| \le l(X) < \infty$.
By Theorem \ref{SDTMprdecG} we may write in the unique way $\mu = m + \mu'$, where
$m$ is a signed Radon measure, $ \mu'$ is a proper signed deficient topological measure, 
and  $ \| m \|, \, \| \mu' \| < \infty$.
For $l$  by Theorem \ref{PropDec} we write $ l= l+ 0$, and this decomposition is unique by Theorem \ref{SDTMprdecG}.
By part \ref{cr2} of Theorem \ref{pravner} $ \mu' = 0$, and $ \mu = m$ is a signed Radon measure with finite norm.
\end{proof}

\begin{corollary} \label{CorPrp2}
Suppose $\mu$ is a signed singleton-finite topological measure. Then $ \mu$ 
is a signed Radon measure with finite norm iff $ | \mu |$ is a finite Radon measure. 
\end{corollary}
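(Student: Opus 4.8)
The plan is to prove the two implications separately: Corollary~\ref{CorPrp1} handles the direction in which $|\mu|$ is assumed to be a finite Radon measure, and a direct norm estimate handles the converse.

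\emph{Suppose $|\mu|$ is a finite Radon measure.} I would set $l = |\mu|$ and check the hypothesis of Corollary~\ref{CorPrp1}, i.e. that $|\mu(K)| \le |\mu|(K)$ for every $K \in \mathscr{K}(X)$. This is immediate from Definition~\ref{laplu}: for any open $U \supseteq K$ the single-set family $\{K\}$ is admissible in the supremum defining $|\mu|(U)$, so $|\mu(K)| \le |\mu|(U)$; taking the infimum over all such $U$ and using the outer regularity of $|\mu|$ built into Definition~\ref{laplu} gives $|\mu(K)| \le |\mu|(K) = l(K)$. Since $\mu$ is a signed singleton-finite topological measure and $l$ is a finite Radon measure, Corollary~\ref{CorPrp1} then yields that $\mu$ is a signed Radon measure with finite norm.

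\emph{Suppose $\mu$ is a signed Radon measure with $\| \mu \| < \infty$.} Because $\mu$ is then a genuine signed measure, Remark~\ref{varn} identifies $|\mu|$ with the usual total variation of $\mu$, which is a Radon measure; it remains only to show it is finite. Since $X$ is open, Definition~\ref{laplu} gives $|\mu|(X) = \sup \{ \sum_{i=1}^n |\mu(K_i)| : \bigsqcup_{i=1}^n K_i \subseteq X, \ K_i \in \mathscr{K}(X) \}$. For a fixed disjoint family I would split the indices according to the sign of $\mu(K_i)$ and use finite additivity on disjoint compact sets (condition~\ref{STM1}) to assemble the positive-signed sets into one compact set and the negative-signed sets into another; each resulting value is bounded in absolute value by $\| \mu \|$, so $\sum_{i=1}^n |\mu(K_i)| \le 2 \| \mu \|$. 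Taking the supremum gives $|\mu|(X) \le 2 \| \mu \| < \infty$, so $|\mu|$ is a finite Radon measure.

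The only substantive step is the estimate $|\mu|(X) \le 2\|\mu\|$ in the forward direction; the rest is bookkeeping. I expect this to be the main point, and it is exactly the additivity of $\mu$ on disjoint compact sets that makes the sign-grouping work, since it lets each group of compacts be replaced by a single compact set whose $\mu$-value is controlled by the norm.
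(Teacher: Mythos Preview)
Your proof is correct and follows the same route as the paper. For the backward implication the paper also invokes Corollary~\ref{CorPrp1} (writing simply ``$\mu \le |\mu|$'' for the hypothesis you verify explicitly), and for the forward implication the paper just says ``Clear''; your sign-grouping estimate $|\mu|(X)\le 2\|\mu\|$ together with Remark~\ref{varn} is a legitimate way to unpack that word.
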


\begin{proof}
($\Longleftarrow$) Since $ \mu \le | \mu |$,  by Corollary \ref{CorPrp1} $ \mu$ is a signed Radon measure with finite norm.
($\Longrightarrow$) Clear.
\end{proof} 

\begin{corollary} \label{CorPrp3}
Suppose  $n$ is a Radon measure and $\mu$ is a singleton-finite signed topological measure.
If  $| \mu(K) | \le n(K)$ for any $ K \in \mathscr{K}(X)$ then $ |\mu | $ is a Radon measure. 
\end{corollary}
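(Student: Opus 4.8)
The plan is to reduce to the finite case handled by Corollaries \ref{CorPrp1} and \ref{CorPrp2} by a localization argument. First I would record the global consequence of the hypothesis: the total variation $|\mu|$ is dominated by $n$ on all of $\mathscr{O}(X)\cup\mathscr{C}(X)$. Indeed, for an open set $U$ and pairwise disjoint compacta $K_1,\dots,K_p\subseteq U$ one has $\sum_{i=1}^p |\mu(K_i)|\le \sum_{i=1}^p n(K_i)=n(\bigsqcup_{i=1}^p K_i)\le n(U)$, using additivity and monotonicity of the measure $n$; taking the supremum over such families gives $|\mu|(U)\le n(U)$ by Definition \ref{laplu}, and then outer regularity of $n$ yields $|\mu|(F)\le n(F)$ for closed $F$. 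Hence $|\mu|\le n$ everywhere, and since $n$ is compact-finite so is $|\mu|$. By Remark \ref{varn} $|\mu|$ is a deficient topological measure, so it is inner regular on open sets and outer regular on closed sets; therefore it only remains to show that $|\mu|$ is a genuine Borel measure, and for this it suffices to prove that $|\mu|$ is finitely subadditive on compact sets, after which the criterion of \cite[Section 4]{Butler:DTMLC} identifies $|\mu|$ as a (regular Borel, hence by compact-finiteness Radon) measure.

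I would then establish finite subadditivity locally. Fix compact $C,K$ and, using Lemma \ref{easyLeLC}, choose an open $W$ with $C\cup K\subseteq W\subseteq\overline W$ and $\overline W$ compact, so that $n(\overline W)<\infty$. Restricting $\mu$ to the (open, hence locally compact) subspace $W$ produces a signed topological measure $\mu_W$, and $\mu_W$ has finite norm because $|\mu_W(D)|=|\mu(D)|\le n(D)\le n(\overline W)<\infty$ for every compact $D\subseteq W$; moreover $\mu_W$ is dominated on compacta by the finite Radon measure obtained by restricting $n$ to $W$. Corollary \ref{CorPrp1} then shows $\mu_W$ is a signed Radon measure of finite norm, and Corollary \ref{CorPrp2} shows $|\mu_W|$ is a finite Radon measure, in particular finitely subadditive. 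Since the total variation is computed from values on compacta contained in $W$, we have $|\mu_W|=|\mu|$ on compact subsets of $W$, so $|\mu|(C\cup K)\le |\mu|(C)+|\mu|(K)$. As $C,K$ were arbitrary, $|\mu|$ is finitely subadditive on compact sets, and the reduction of the first paragraph finishes the proof.

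The main obstacle is the localization step: one must make precise that the restriction of a signed topological measure to a relatively compact open subspace is again a signed topological measure (now of finite norm), that restricting the Radon measure $n$ gives a finite Radon measure, and that total variation is genuinely local, so that $|\mu_W|$ and $|\mu|$ agree on compacta inside $W$. A direct global argument seems unavailable: a deficient topological measure can fail finite subadditivity (this is exactly how genuine topological measures differ from measures), and the mere domination $|\mu|\le n$ does not by itself force subadditivity --- one really must use that $\mu$ is a topological measure together with the pointwise bound $|\mu(K)|\le n(K)$, which is precisely what the finite-norm Corollaries \ref{CorPrp1}--\ref{CorPrp2} exploit. Thus the whole difficulty is concentrated in transporting those finite-norm results to the relatively compact pieces of $X$ and patching the resulting subadditivity.
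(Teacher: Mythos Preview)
Your opening paragraph coincides with the paper's: both establish $|\mu|\le n$ by the same estimate on disjoint compacta. From there the arguments diverge sharply. The paper does not localize; instead it invokes Theorem~\ref{adnlDTM} to obtain a deficient topological measure $\lambda$ with $|\mu|+\lambda=n$, applies the Radon-plus-proper decomposition of Theorem~\ref{PropDec} to $|\mu|$, $\lambda$, and $n$ separately (writing $|\mu|=m+\mu'$, $\lambda=l+\lambda'$, $n=n+0$), and then reads off $(m+l)+(\mu'+\lambda')=n+0$, from which uniqueness of the decomposition of $n$ forces $\mu'+\lambda'=0$; since both summands are nonnegative, $\mu'=0$ and $|\mu|=m$ is Radon. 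All of this takes place on $X$, using only the global tools of Sections~3--4.

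Your localization route has a gap you yourself identify but do not close: that the restriction $\mu_W$ of a signed topological measure to a relatively compact open $W$ is again a signed topological measure on $W$. Conditions (STM1) and (STM2) transfer for free since $\mathscr{K}(W)\cup\mathscr{O}(W)\subseteq\mathscr{K}(X)\cup\mathscr{O}(X)$, but (STM3) for a set $F$ that is closed in $W$ yet not in $X$ requires the net $\bigl(\mu(U)\bigr)_{U\supseteq F,\ U\text{ open in }W}$ to converge, and this is not automatic: boundedness of the net (which you do get from $|\mu|\le n$ and $n(\overline W)<\infty$) does not by itself imply convergence. The decomposition $\mu=\mu^+-\mu^-$ into deficient topological measures, which would make each piece monotone and hence each such net convergent, is supplied in the cited sources only under the global hypothesis $\|\mu\|<\infty$. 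Until this restriction step is made rigorous you cannot invoke Corollaries~\ref{CorPrp1}--\ref{CorPrp2} on $W$, and the argument does not go through as written. The paper's direct decomposition approach sidesteps this difficulty entirely.
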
 

\begin{proof}
For any finite disjoint collection of compact sets $\bigsqcup_{i=1}^p K_i \subseteq U$ 
where $ U \in \mathscr{O}(X)$ we have $ \sum_{i=1}^p | \mu(K_i) | \le  \sum_{i=1}^p n(K_i)   = n(\bigsqcup_{i=1}^n K_i ) \le n(U)$, 
so by Definition \ref{laplu} $ | \mu| (U) \le n(U)$. From $ | \mu | \le n$ on $ \mathscr{O}(X) $ we have $ | \mu | \le n$.
By Theorem \ref{adnlDTM} there exists a deficient topological measure $ \lambda$ such that $ | \mu |  + \lambda = n$.
By Theorem \ref{PropDec} write decompositions $ | \mu| = m + \mu', \lambda = l+ \lambda', n=n+0$, 
where $m, l $ are Radon measures, and  $\mu', \lambda'$ are proper singleton-finite deficient topological measures.
Then $ | \mu| + \lambda =  m + \mu' + l+ \lambda' = (m+l) + ( \mu' + \lambda') = n+0$. Thus, $ \mu' + \lambda' = 0$, i.e. $ \mu' = \lambda' =0$,
and $| \mu| = m$ is a Radon measure.
\end{proof} 
 
\begin{example} \label{ExProSDTm}
We will give an example of a proper signed topological measure which is not a (proper) signed measure.
Consider a signed topological measure
$\mu = \nu_1 - \nu_3$ from~\cite[Example 25]{OrjanAlf:CostrPropQlf}. It is easy to represent $X$ as a finite union of open solid sets
each of which contains at most one  point from $P$. By Theorem \ref{epsProper}, $\nu_1$ and $\nu_3$ are proper
topological measures. By Remark \ref{spdtmsum} $\mu$ is a proper signed topological measure. 
Both $\nu_1$ and $ \nu_3$ are finite, inner regular on $ \mathscr{O}(X)$, outer regular on $ \mathscr{O}(X) \cup \mathscr{C}(X)$, so if $\mu$ is a signed measure, 
it is a signed (proper) Radon measure. But this is impossible by Remark \ref{properDtm}, since $\mu \neq 0$. 
Thus, $\mu$ can not be a signed measure. 
\end{example}

\begin{lemma} \label{nuproperVk}
Suppose $\nu$ is a proper singleton-finite signed deficient topological measure. Then $\nu = 0$ iff 
\begin{align} \label{nuravno}
\nu(K \cup C) = \nu(K)  + \nu(C)  -\nu(K \cap C) \mbox{     for any compact sets    } K, C.    
\end{align}
\end{lemma}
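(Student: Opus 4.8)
The forward implication is immediate: if $\nu = 0$ then both sides of \eqref{nuravno} vanish. So I would concentrate on the converse, assuming \eqref{nuravno} and proving $\nu(K) = 0$ for every compact $K$; by Remark \ref{byCompacts} this forces $\nu = 0$.

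The starting point is to convert properness into a covering statement. Since $\nu$ is a proper singleton-finite signed deficient topological measure, Remark \ref{PropDefEpsi} gives, for each compact $K$ and each $\epsilon > 0$, finitely many compact sets $K_1, \ldots, K_n$ with $K = \bigcup_{i=1}^n K_i$ and $\sum_{i=1}^n |\nu|(K_i) < \epsilon$. Two elementary facts about $|\nu|$ will be used repeatedly: first, $|\nu(C)| \le |\nu|(C)$ for every compact $C$ (take the one-term family $\{C\}$ in Definition \ref{laplu} and then pass to the outer infimum); second, $|\nu|$ is a deficient topological measure (Remark \ref{varn}) and hence monotone (Remark \ref{DTMsvva}). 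In particular each $|\nu|(K_i) < \infty$, so $\nu$ is finite on $K_i$ and on every compact subset of $K_i$.

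The key step is to reconstruct $\nu(K)$ from the small pieces $K_i$ by iterating \eqref{nuravno}. Writing $E_i = \big(\bigcup_{j<i} K_j\big) \cap K_i = \bigcup_{j<i}(K_j \cap K_i)$ (with $E_1 = \emptyset$), modularity applied to $A = \bigcup_{j<i}K_j$ and $B = K_i$ gives $\nu(\bigcup_{j\le i}K_j) = \nu(\bigcup_{j<i}K_j) + \nu(K_i) - \nu(E_i)$. Here lies the one point that needs care: a priori $\nu$ could be infinite on the partial unions, and since $\nu$ may assume one of $\pm\infty$ I must avoid an $\infty - \infty$. This is handled by an induction on $i$: $\nu(K_1)$ is finite, and if $\nu(\bigcup_{j<i}K_j)$ is finite then, because $\nu(K_i)$ and $\nu(E_i)$ are finite (as $E_i \subseteq K_i$ and $|\nu|(K_i) < \infty$), the right-hand side of the displayed identity is a finite sum, forcing $\nu(\bigcup_{j\le i}K_j)$ to be finite as well. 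Telescoping the identity then yields $\nu(K) = \sum_{i=1}^n \nu(K_i) - \sum_{i=2}^n \nu(E_i)$, an equality between finite quantities.

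Finally I would estimate. Using $|\nu(K_i)| \le |\nu|(K_i)$ and, since $E_i \subseteq K_i$ and $|\nu|$ is monotone, $|\nu(E_i)| \le |\nu|(E_i) \le |\nu|(K_i)$, the telescoped identity gives $|\nu(K)| \le \sum_{i=1}^n |\nu(K_i)| + \sum_{i=2}^n |\nu(E_i)| \le 2\sum_{i=1}^n |\nu|(K_i) < 2\epsilon$. As $\epsilon>0$ was arbitrary, $\nu(K) = 0$, and the proof is complete. I expect the only genuine obstacle to be the bookkeeping just described, namely guaranteeing that every term produced by iterating \eqref{nuravno} is finite (so that modularity may legitimately be applied) and that the unavoidable ``overlap'' terms $E_i$ are absorbed; both are resolved by the inductive finiteness argument together with the monotonicity of $|\nu|$, which confines each $E_i$ inside $K_i$.
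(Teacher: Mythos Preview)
Your proof is correct and follows essentially the same approach as the paper: iterate the modularity identity \eqref{nuravno} to express $\nu(K)$ as $\sum_i \nu(K_i)$ minus the overlap terms $\nu(E_i)$, then bound everything by $2\sum_i |\nu|(K_i) < 2\epsilon$ via $E_i \subseteq K_i$ and monotonicity of $|\nu|$. You are in fact more careful than the paper about the finiteness bookkeeping needed to legitimately telescope, which the paper leaves implicit.
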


\begin{proof}
If $\nu = 0$ then (\ref{nuravno}) obviously holds. Now assume (\ref{nuravno}). Then it follows that for finitely 
many compact sets $K_1, \ldots, K_n$ we have 
$$ \nu( \bigcup_{i=1}^n K_i) = \sum_{i=1}^n \nu(K_i) - \nu(K_1 \cap K_2) -  \nu((K_1 \cup K_2) \cap K_3) -
\ldots - \nu( (\bigcup_{i=1}^{n-1} K_i) \cap K_n).$$
Then $|  \nu( \bigcup_{i=1}^n K_i) | \le  2 \sum_{i=1}^n | \nu| (K_i)$. 
By Remark \ref{PropDefEpsi}, given a compact set $K$ and $\epsilon >0$, there are finitely many compact sets $K_1, \ldots, K_n$ 
such that  $K = \bigcup_{i=1}^n K_i$ and $\sum_{i=1}^n |\nu| (K_i) < \epsilon $. Then 
$ | \nu(K)| = | \nu( (\bigcup_{i=1}^n K_i )) | \le 2 \sum_{i=1}^n | \nu| (K_i) \le 2 \epsilon.$
Thus, $\nu(K) = 0$ for any compact set $K$, so $\nu=0$.
\end{proof}

\begin{theorem} \label{whenSTM}
Suppose a signed singleton-finite deficient topological measure $ \mu$ is the difference of two deficient topological measures, 
one singleton-finite, the other  finite. 
(In particular, $\mu$ is a signed topological measure with $\| \mu \| < \infty$.)
Then $\mu$ is a signed Radon measure iff equality (\ref{nuravno}) holds for $\mu$.
\end{theorem}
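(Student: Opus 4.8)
The plan is to prove both implications, with the forward direction routine and the reverse direction resting on the decomposition of Theorem \ref{SDTMprdecG} together with Lemma \ref{nuproperVk}.

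For the ($\Longrightarrow$) direction, suppose $\mu$ is a signed Radon measure. Being a difference of two Radon measures (at least one finite), $\mu$ is compact-finite, so the four quantities $\mu(K), \mu(C), \mu(K \cup C), \mu(K \cap C)$ are all real numbers for any compact $K, C$. The modular identity $\mu(K \cup C) + \mu(K \cap C) = \mu(K) + \mu(C)$ then holds by finite additivity of the underlying Borel measures applied to the Borel sets $K$ and $C$, which is precisely equality (\ref{nuravno}).

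For the ($\Longleftarrow$) direction, assume (\ref{nuravno}) holds for $\mu$. Since $\mu$ satisfies the hypothesis of Theorem \ref{SDTMprdecG}, part \ref{dep1} of that theorem gives a decomposition $\mu = m + \mu'$, where $m$ is a signed Radon measure and $\mu'$ is a proper singleton-finite signed deficient topological measure. My goal is to show $\mu' = 0$, for then $\mu = m$ is a signed Radon measure. Because $m$ is a signed Radon measure it is compact-finite and, by the argument of the forward direction, satisfies (\ref{nuravno}). On compact sets $\mu' = \mu - m$ with $m$ finite-valued, so subtracting the identity for $m$ from the identity for $\mu$ yields $\mu'(K \cup C) = \mu'(K) + \mu'(C) - \mu'(K \cap C)$ for all compact $K, C$; that is, (\ref{nuravno}) holds for $\mu'$. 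Lemma \ref{nuproperVk}, applied to the proper singleton-finite signed deficient topological measure $\mu'$, then forces $\mu' = 0$, completing the proof.

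The main point to watch is the legitimacy of the subtraction yielding (\ref{nuravno}) for $\mu'$ when $\mu$ fails to be compact-finite. Writing $\mu = \nu - \lambda$ with $\nu$ a singleton-finite deficient topological measure and $\lambda$ finite, we see $\mu$ takes values in $(-\infty, \infty]$, while $m$ is finite on every compact set; the only delicate case is $\mu(K \cap C) = \infty$, which by monotonicity of $\nu$ (Remark \ref{DTMsvva}) forces $\mu(K) = \mu(C) = \mu(K \cup C) = \infty$ as well, so that the rearrangement can be carried out consistently. Apart from this bookkeeping the argument is a direct combination of the decomposition theorem with Lemma \ref{nuproperVk}.
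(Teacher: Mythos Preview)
Your proof is correct and follows essentially the same route as the paper: decompose $\mu = m + \mu'$ via Theorem \ref{SDTMprdecG}, observe that (\ref{nuravno}) always holds for the signed Radon measure $m$, deduce that (\ref{nuravno}) holds for $\mu$ iff it holds for $\mu'$, and then invoke Lemma \ref{nuproperVk}. Your added paragraph on the $\infty$-bookkeeping actually goes slightly beyond the paper's own proof, which passes over this point without comment.
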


\begin{proof}
By Theorem \ref{SDTMprdecG} there is a decomposition $\mu = m  + \mu'$,
where $m $ is a signed Radon measure and $\mu'$ is a proper singleton-finite signed deficient topological measure. 
Since (\ref{nuravno}) is satisfied for signed measures, we see that 
(\ref{nuravno}) holds for $\mu$ iff (\ref{nuravno}) holds for $\mu'$, which by Lemma  \ref{nuproperVk}  happens iff 
$\mu' = 0$, i.e.  iff $\mu = m$ is a signed Radon measure.
\end{proof}  

\begin{remark}
If $X$ is compact and $ \mu$ is a signed topological measure with finite norm, then 
$\mu$ is a regular Borel signed measure on $X$ if $ \mu(U) + \mu(V) = \mu(U \cup V) + \mu(U \cap V)$
for $U, V \in \mathscr{O}(X)$. See~\cite[Theorem1]{Grubb:SignedqmDimTheory}.
\end{remark}
  

\begin{remark}
When $X$ is compact and $\nu$ is a finite topological measure, Theorem \ref{PropDec} was first proved by D. Grubb and 
T. LaBerge, see~\cite[Theorem 2.1]{GrubbLaberge}. When $X$ is compact and $\nu$ is a finite deficient topological measure
Theorem \ref{PropDec} was first proved by M. Svistula, see~\cite[Theorem 1]{Svistula:Signed}.
Part \ref{finPdtmX} of Theorem \ref{epsProper}, Theorem \ref{sumPropDTM}, and  Theorem \ref{pravnerM}
generalize~\cite[Theorems 2, 3, and Proposition 7]{Svistula:Signed}, which in turn generalize corresponding results for
topological measures in~\cite{Svistula:Proper}. Lemma \ref{finex}, Lemma \ref{fmv}, and Corollary \ref{CorPrp2a} generalize 
corresponding results from~\cite[pp. 675, 677]{Svistula:Proper}.
Part \ref{dep3} of Theorem \ref{SDTMprdecG}, Theorem \ref{pravner}, Corollary \ref{CorPrp1}, Corollary \ref{CorPrp2}, 
Lemma \ref{nuproperVk}, and Theorem \ref{whenSTM} 
generalize~\cite[Theorem 6, Proposition 9, Corollaries 1, 2, 3, and Propositions 10, 11]{Svistula:Signed}. 
In all  previous results the space was compact and all topological measures and deficient topological measures were finite.  
In our new results the space is locally compact and we no longer require set functions to be finite. 
\end{remark}

\begin{remark}
Topological measures and deficient  topological measures correspond to certain non-linear functionals, see~\cite{Butler:ReprDTM}. 
Thus, in a manner similar to one in~\cite{Svistula:Proper} and~\cite{Svistula:Signed},
 the results involving proper deficient topological measures can be transferred to corresponding non-linear functionals. 
 \end{remark}
 
{\bf{Acknowledgments}}:
The author would like to thank the Department of Mathematics at the University of California Santa Barbara for its supportive environment.

\end{document}